\numberwithin{equation}{section}
\newcommand{\Prim}{\text{{\rm Prim}}}
\newcommand{\Spec}{\text{{\rm Spec}}}
\newtheorem{lemma}{Lemma}[section]
\newtheorem{corollary}[lemma]{Corollary}
\newtheorem{theorem}[lemma]{Theorem}
\newtheorem{proposition}[lemma]{Proposition}
\newtheorem{remark}[lemma]{Remark}
\newtheorem{definitions}[lemma]{Definitions}
\newtheorem{examples}[lemma]{Examples}
\newtheorem*{notation}{Notation}
\newcommand{\Ker}{{\rm Ker}}
\newcommand{\Mod}{{\rm Mod}}
\newcommand{\Ann}{{\rm Ann}}
\begin{document}

\subjclass[2000]{Primary 16D70} \keywords{Leavitt path algebra, prime ideal, maximal tail, primitive ring}
\date{\today}
\title[Prime spectrum and primitive Leavitt path algebras]{Prime spectrum and primitive Leavitt path algebras}
\author{G. Aranda Pino}
\address{Departamento de \'Algebra, Universidad Complutense de Madrid,  28040 Madrid, Spain.}
\email{gonzaloa@mat.ucm.es}\author{E. Pardo}
\address{Departamento de Matem\'aticas, Universidad de C\'adiz,
Apartado 40, 11510 Puerto Real (C\'adiz),
Spain.}\email{enrique.pardo@uca.es}
\urladdr{http://www2.uca.es/dept/matematicas/PPersonales/PardoEspino/index.HTML}
\author{M. Siles Molina}
\address{Departamento de \'Algebra, Geometr\'{\i}a y Topolog\'{\i}a,
Universidad de M\'alaga,  29071 M\'alaga, Spain.}\email{mercedes@agt.cie.uma.es}
\thanks{The first author was supported by a Centre de Recerca Matem\`{a}tica Fellowship within the Research Programme
``Discrete and Continuous Methods on Ring Theory". The second author was partially supported by the DGI and European Regional Development Fund,
jointly, through Project MTM2004-00149, by PAI III projects FQM-298 and P06-FQM-1889 of the Junta de Andaluc\'{\i}a, and by the Comissionat per
Universitats i Recerca de la Generalitat de Catalunya. The first and third authors by the Spanish MEC and Fondos FEDER jointly through projects
MTM2004-06580-C02-02 and MTM2007-60333, and PAI III projects FQM-336 and FQM-1215 of the Junta de Andaluc\'{\i}a; and all three authors by the Consolider Ingenio
``Mathematica" project CSD2006-32 by the MEC}

\begin{abstract} In this paper a bijection between the set of prime ideals of a Leavitt path algebra $L_K(E)$ and
a certain set which involves maximal tails in $E$ and the prime spectrum of $K[x,x^{-1}]$ is established. Necessary and sufficient conditions on the
graph $E$ so that the Leavitt path algebra $L_K(E)$ is primitive are also found.
\end{abstract}

\maketitle


\section*{introduction}

Leavitt path algebras of row-finite graphs have been recently introduced in \cite{AA1} and \cite{AMP}. They have become a subject of
significant interest, both for algebraists and for analysts working in C*-algebras. The Cuntz-Krieger algebras $C^*(E)$ (the C*-algebra counterpart
of these Leavitt path algebras) are described in \cite{R}. The algebraic and analytic theories, while sharing some striking similarities, they present some remarkable differences, as was shown for instance in the ``Workshop on Graph Algebras'' held at the University of M\'alaga (see \cite{Work}), and more
deeply in the subsequent enlightening work of Tomforde \cite{T}.

For a field $K$, the algebras $L_K(E)$ are natural generalizations of the algebras investigated by Leavitt in \cite{Le}, and are a specific type of
path $K$-algebras associated to a graph $E$ (modulo certain relations). The family of algebras which can be realized as the Leavitt path algebras of
a graph includes matrix rings ${\mathbb M}_n(K)$ for $n\in \mathbb{N}\cup \{\infty\}$ (where ${\mathbb M}_\infty(K)$ denotes matrices of countable
size with only a finite number of nonzero entries), the Toeplitz algebra, the Laurent polynomial ring $K[x,x^{-1}]$, and the classical Leavitt
algebras $L(1,n)$ for $n\ge 2$. Constructions such as direct sums, direct limits, and matrices over the previous examples can be also realized in
this setting. But, in addition to the fact that these structures indeed contain many well-known algebras, one of the main interests in their study is
the comfortable pictorial representations that their corresponding graphs provide.

A great deal of effort has been focused on trying to unveil the algebraic structure of $L_K(E)$ via the graph nature of $E$.
Concretely, the literature on Leavitt path algebras includes necessary and sufficient conditions on a graph $E$ so that the corresponding Leavitt
path algebra $L_K(E)$ is simple \cite{AA1}, purely infinite simple \cite{AA2}, exchange \cite{APS}, finite dimensional \cite{AAS1}, locally finite
(equivalently noetherian) \cite{AAS2} and semisimple \cite{AAPS}. Another remarkable approach has been the research (performed quite intensively in
\cite{AMP}, and only slightly in \cite{AAPS}) of their monoids of finitely generated projective modules $V(L_K(E))$.

The aim of this paper is to determine the prime and primitive Leavitt path algebras, which has a twofold motivation. First, from the purely algebraic
point of view, this enterprise is a compulsory as well as a natural one. Throughout the mathematical literature, knowing the prime and primitive
spectra of rings (also of associative, Lie and Jordan algebras, etc) has been crucial in order to succeed to give structural theorems (or in order to
simply gain a better understanding of the given algebraic system). Classically, one of the uses of the prime spectrum for commutative rings is to
carry information over from Algebra to Topology and vice versa via the so-called Zariski topology (several generalizations of this construction for
noncommutative rings have been achieved \cite{ZTW, KT}). As for the primitive ideals of a ring, they naturally correspond to the irreducible
representations of it, which in turn represent unquestionable tools in their analysis. Therefore, the knowledge of the prime and primitive Leavitt
path algebras can be regarded as a fundamental and necessary step towards the ultimate goal of the classification of these algebras. In addition, the
prime and primitive questions are natural ones in the following sense: it is known (see \cite[Proposition 6.1]{AA3} or \cite[Proposition 1.1]{AMMS})
that every Leavitt path algebra is semiprime, and recently it has been proved that every Leavitt path algebra is also semiprimitive \cite[Proposition
6.3]{AA3}. These results obviously raised the questions of whether or not every Leavitt path algebra is also prime or primitive.

The second motivation springs out of the complete description of the primitive spectrum of a graph C*-algebra $C^*(E)$ carried out by Hong and
Szyma\'nski in \cite{HS}. Concretely, in \cite[Corollary 2.12]{HS}, the authors found a bijection between the set $\Prim(C^*(E))$ of primitive ideals
of $C^*(E)$ and some sets involving maximal tails and points of the torus ${\mathbb T}$. This result parallels one of the main result of this article
(Theorem \ref{correspondenciatotal}). However, there is one subtlety here: it is known that every primitive C*-algebra is prime and the converse
holds for separable C*-algebras \cite{D}. It turns out that every graph C*-algebra is separable and therefore the concepts of primeness and
primitivity are indistinguishable for $C^*(E)$. This is no longer the case for Leavitt path algebras $L_K(E)$, and in fact Theorem
\ref{correspondenciatotal} deals with the prime spectrum of a Leavitt path algebra whereas its analytic counterpart \cite[Corollary 2.12]{HS}
considers primitive ideals.

Hence, the primitive case for $L_K(E)$ deserves a different examination to the prime case, and Theorem \ref{primitivos} states the primitive
characterization for Leavitt path algebras. This result does not correspond verbatim to the characterization of primitive (equivalently prime) graph
C*-algebras, the difference being the possibility of having cycles without exits. This difference in graph criteria of a certain property for
$L_K(E)$ and $C^*(E)$ is not new, as it too showed up in the computation of the stable rank for $L_K(E)$ in \cite[Theorem 7.6]{APS}, and of the
stable rank for $C^*(E)$ in \cite[Theorem 3.4]{DHSz}.

The article is organized as follows. The Preliminaries section includes the basic definitions and examples that will be used throughout. In addition,
we describe several graph constructions and more specific but general properties of $L_K(E)$ that will be of use in the rest of the paper.

In Section 2 the first step of the investigation of prime ideals is carried out. We start by analyzing some subset of vertices of the graph called
maximal tails and then show that they are in one-to-one correspondence with the set of graded prime ideals of $L_K(E)$. Further along in Section 2,
several lemmas concerning prime but not necessarily graded ideals are obtained. Those are key ingredients in the study of the prime spectrum in the
the following section. Informally, these results tell us how to uniquely obtain, out of a graded but not necessarily prime ideal $I$, two things: a
maximal tail and a graded prime ideal contained in $I$.

The classification of all prime ideals is accomplished in Section 3. Some preliminary results discussing ideals generated by $P_c(E)$ (that is, the
vertices for which there are cycles without exits based at them) are settled. Those and other partial results finally pave the way for the proof of
one of the main results of the paper (Theorem \ref{correspondenciatotal}), which exhibits a bijection between the set of prime ideals of $L_K(E)$,
and the set formed by the disjoint union of the maximal tails of the graph ${\mathcal M}(E)$ and the cartesian product of maximal tails for which
every cycle has an exit ${\mathcal M}_\tau(E)$ and the nonzero prime ideals of the Laurent polynomial ring $\Spec(K[x,x^{-1}]^*)$. As noted before,
Theorem \ref{correspondenciatotal} is the algebraic analog of the graph C*-algebra result stated in \cite[Corollary 2.12]{HS}. However, it is worth
mentioning that their proofs are certainly unrelated since they involve totally different methods and what is more, neither can be (at least readily)
obtained from the other.

The natural subsequent step is taken in Section 4, where the primitive Leavitt path algebras are determined. In order to achieve this goal, several
results on simple right $L_K(E)$-modules are established. Then, in the other main theorem of this paper (Theorem \ref{primitivos}), necessary and
sufficient conditions are given so that a Leavitt path algebra $L_K(E)$ is left (equivalently right) primitive. In contrast with the prime spectrum
correspondence, this characterization of primitive Leavitt path algebras lacks a graph C*-algebra version.

\section{Preliminaries}

A \emph{(directed) graph} $E=(E^0,E^1,r,s)$ consists of two countable sets $E^0,E^1$ and maps $r,s:E^1 \to
E^0$. The elements of $E^0$ are called \emph{vertices} and the elements of $E^1$ \emph{edges}. If $s^{-1}(v)$
is a  finite set for every $v\in E^0$, then the graph is called \emph{row-finite}.  Throughout this paper we
will be concerned only with row-finite graphs. If $E^0$ is finite, then, by the row-finite hypothesis, $E^1$
must necessarily be finite as well; in this case we say simply that $E$ is \emph{finite}.

A vertex which emits no edges is called a \emph{sink}. A \emph{path} $\mu$ in a graph $E$ is a sequence of
edges $\mu=e_1\dots e_n$ such that $r(e_i)=s(e_{i+1})$ for $i=1,\dots,n-1$. In this case, $s(\mu):=s(e_1)$ is
the \emph{source} of $\mu$, $r(\mu):=r(e_n)$ is the \emph{range} of $\mu$, and $n$ is the \emph{length} of
$\mu$. For $n\ge 2$ we define $E^n$ to be the set of paths of length $n$, and $E^*=\bigcup_{n\ge 0} E^n$ the
set of all paths. Throughout the paper $K$ will denote an arbitrary field.

\smallskip

Let $K$ be a field  and $E$ a directed graph. Denote by $KE$ the $K$-vector space which has as a basis the
set of paths. It is possible to define an algebra structure on $KE$ as follows: for any two paths
$\mu=e_1\dots e_m, \nu=f_1\dots f_n$, we define $\mu\nu$ as zero if $r(\mu)\neq s(\nu)$ and as $e_1\dots e_m
f_1\dots f_n$ otherwise. This $K$-algebra is called the \textit{path algebra} of $E$ over $K$.

\smallskip

We define the {\em Leavitt path $K$-algebra} $L_K(E)$, or simply $L(E)$ if the base field is understood, as
the $K$-algebra generated by a set $\{v\mid v\in E^0\}$ of pairwise orthogonal idempotents, together with a
set of variables $\{e,e^*\mid e\in E^1\}$, which satisfy the following relations:

(1) $s(e)e=er(e)=e$ for all $e\in E^1$.

(2) $r(e)e^*=e^*s(e)=e^*$ for all $e\in E^1$.

(3) $e^*e'=\delta _{e,e'}r(e)$ for all $e,e'\in E^1$.

(4) $v=\sum _{\{ e\in E^1\mid s(e)=v \}}ee^*$ for every $v\in E^0$ that emits edges.

\smallskip

Relations (3) and (4) are called of Cuntz-Krieger.

The elements of $E^1$ are called \emph{real edges}, while for $e\in E^1$ we call $e^\ast$ a \emph{ghost edge}. The set $\{e^*\mid e\in E^1\}$ will be
denoted by $(E^1)^*$.  We let $r(e^*)$ denote $s(e)$, and we let $s(e^*)$ denote $r(e)$.  If $\mu = e_1 \dots e_n$ is a path, then we denote by
$\mu^*$ the element $e_n^* \dots e_1^*$ of $L(E)$, and by $\mu^0$ the set of its vertices, i.e., $\{s(\mu_1),r(\mu_i)\mid i=1,\dots,n\}$. It was
shown in \cite[Lemma 1.5]{AA1} that every monomial in $L(E)$ is of the form:  $kv$, with $k\in K$ and $v\in E^0$, or $k e_1\dots e_m f_1^*\dots
f_n^*$ for $k\in K$, $m, n\in \mathbb{N}$, $e_i,f_j\in E^1$. For any subset $H$ of $E^0$, we will denote by $I(H)$ the ideal of $L(E)$ generated by
$H$.

Note that if $E$ is a finite graph then we have $\sum _{v\in E^0} v=1_{L(E)}$.  On the other hand, if $E^0$ is infinite, then by \cite[Lemma
1.6]{AA1} $L(E)$ is a nonunital ring with a set of local units.  In fact, in this situation, $L(E)$ is a ring with {\it enough  idempotents} (see
e.g. \cite{F} or \cite{T}), and we have the decomposition $L(E)=\oplus_{v\in E^0}L(E)v$ as left $L(E)$-modules. (Equivalently, we have
$L(E)=\oplus_{v\in E^0}vL(E)$ as right $L(E)$-modules.)

\begin{examples} {\rm By considering some basic configurations one can realize many algebras as the Leavitt path algebra of some graph. Thus,
for instance, the ring of Laurent polynomials $K[x,x^{-1}]$ is the Leavitt path algebra of the graph

$$\xymatrix{{\bullet} \ar@(ur,ul)  }$$

Matrix algebras $M_n(K)$ can be achieved by considering a line graph with $n$ vertices and $n-1$ edges

$$\xymatrix{{\bullet} \ar [r]  & {\bullet} \ar [r]  & {\bullet} \ar@{.}[r] & {\bullet} \ar [r]  & {\bullet} }$$

Classical Leavitt algebras $L(1,n)$ for $n\geq 2$ are obtained as $L(R_n)$ where $R_n$ is the rose with $n$ petals graph

$$\xymatrix{{\bullet} \ar@(ur,dr)  \ar@(u,r)  \ar@(ul,ur)  \ar@{.} @(l,u) \ar@{.} @(dr,dl) \ar@(r,d) &  }$$ \smallskip

Of course, combinations of the previous examples are possible. For example, the Leavitt path algebra of the graph

$$\xymatrix{{\bullet} \ar [r]  & {\bullet} \ar [r]  & {\bullet}
\ar@{.}[r] & {\bullet} \ar [r]  & {\bullet}
 \ar@(ur,dr)  \ar@(u,r)  \ar@(ul,ur)  \ar@{.} @(l,u) \ar@{.} @(dr,dl) \ar@(r,d) & }$$ \smallskip

is $M_n(L(1,m))$, where $n$ denotes the number of vertices in the graph and $m$ denotes the number of loops. In addition, the algebraic counterpart
of the Toeplitz algebra $T$ is the Leavitt path algebra of the graph $E$ having one loop and one exit
$$\xymatrix{{\bullet} \ar@(dl,ul) \ar[r] & {\bullet}  }$$
}
\end{examples}

\medskip

It is shown in \cite{AA1} that $L(E)$ is a ${\mathbb Z}$-graded $K$-algebra, spanned as a $K$-vector space by $\{pq^* \mid p,q$ are paths in $E\}$.
In particular, for each $n\in \mathbb{Z}$, the degree $n$ component $L(E)_n$ is spanned by elements of the form  $pq^*$ where $l(p)-l(q)=n$. The
degree of an element $x$, denoted $deg(x)$, is the lowest number $n$ for which $x\in \bigoplus_{m\leq n} L(E)_m$.

For us, by a {\it countable} set we mean a set which is either finite or countably infinite. The symbol ${\mathbb M}_\infty(K)$ will denote the
$K$-algebra of matrices over $K$ of countable size but with only a finite number of nonzero entries.

We will analyze the structure of various graphs in the sequel. An important role is played by the following three concepts. An edge $e$ is an {\it
exit} for a path $\mu = e_1 \dots e_n$ if there exists $i$  such that $s(e)=s(e_i)$ and $e\neq e_i$.  If $\mu$ is a path in $E$, and if
$v=s(\mu)=r(\mu)$, then $\mu$ is called a \emph{closed path based at $v$}. If $s(\mu)=r(\mu)$ and $s(e_i)\neq s(e_j)$ for every $i\neq j$, then $\mu$
is called a \emph{cycle}. A graph which contains no cycles is called \emph{acyclic}.

An edge $e$ is an {\it exit} for a path $\mu = e_1 \dots e_n$ if there exists $i$ such that $s(e)=s(e_i)$ and $e\neq e_i$. We say that a graph $E$
satisfies \emph{Condition} (L) if every cycle in $E$ has an exit.

We define a relation $\ge$ on $E^0$ by setting $v\ge w$ if there is a path $\mu\in E^*$ with $s(\mu)=v$ and $r(\mu)=w$.  A subset $H$ of $E^0$ is
called \emph{hereditary} if $v\ge w$ and $v\in H$ imply $w\in H$. A hereditary set is \emph{saturated} if every vertex which feeds into $H$ and only
into $H$ is again in $H$, that is, if $s^{-1}(v)\neq \emptyset$ and $r(s^{-1}(v))\subseteq H$ imply $v\in H$. Denote by $\mathcal{H}_E$ the set of
hereditary saturated subsets of $E^0$.

The set $T(v)=\{w\in E^0\mid v\ge w\}$ is the \emph{tree} of $v$, and it is the smallest hereditary subset of $E^0$  containing $v$. We extend this
definition for an arbitrary set $X\subseteq E^0$ by $T(X)=\bigcup_{x\in X} T(x)$. The \emph{hereditary saturated closure} of a set $X$ is defined as
the smallest hereditary and saturated subset of $E^0$ containing $X$. It is shown in \cite{AMP, BHRS} that the hereditary saturated closure of a set
$X$ is $\overline{X}=\bigcup_{n=0}^\infty \Lambda_n(X)$, where
\begin{enumerate}

\item[] $\Lambda_0(X)=T(X)$, and
\item[] $\Lambda_n(X)=\{y\in E^0\mid
s^{-1}(y)\neq \emptyset$ and $r(s^{-1}(y))\subseteq \Lambda_{n-1}(X)\}\cup \Lambda_{n-1}(X)$, for $n\ge 1$.
\end{enumerate}

Recall that an ideal $J$ of $L(E)$ is graded if and only if it is generated by idempotents; in fact, $J=I(H)$, where $H=J\cap E^0\in \mathcal{H}_E$.
(See the proofs of \cite[Proposition 4.2 and Theorem 4.3]{AMP}.) We will use this fact freely throughout.

We recall here some graph-theoretic constructions which will be of interest. For a hereditary subset of $E^0$, the \emph{quotient graph} $E/H$ is
defined as
$$(E^0\setminus H, \{e\in E^1|\ r(e)\not\in H\}, r|_{(E/H)^1}, s|_{(E/H)^1}),$$ and the \emph{restriction graph} is
$$E_H=(H, \{e\in E^1|\ s(e)\in H\}, r|_{(E_H)^1}, s|_{(E_H)^1}).$$

Sometimes it is useful to view $L(E)$ constructed as the quotient of the path algebra of a certain graph as follows: recall that given a graph $E$
the  \textit{extended graph} of $E$ is defined as the new graph $\widehat{E}=(E^0,E^1\cup (E^1)^*,r',s')$ where $(E^1)^*=\{e_i^*:e_i\in E^1\}$ and
the functions $r'$ and $s'$ are defined as $$r'|_{E^1}=r,\ s'|_{E^1}=s,\ r'(e_i^*)=s(e_i) \hbox{ and } s'(e_i^*)=r(e_i).$$

For a field $K$ and a row-finite graph $E$, the Leavitt path algebra of $E$ with coefficients in $K$ can also be regarded as the path algebra over
the extended graph $\widehat{E}$, with relations:

\begin{enumerate}
\item[(CK1)] $e_i^*e_j=\delta_{ij}s(e_j)$ for every $e_j\in E^1$ and $e_i^*\in (E^1)^*$.

\item[(CK2)] $v_i=\sum_{\{e_j\in E^1:r(e_j)=v_i\}}e_je_j^*$ for every $v_i\in E^0$ which is not a source.
\end{enumerate}

Thus, an element of $L(E)$ will be of the form $\overline x$, with $x\in K\widehat{E}$. In fact, by \cite[Lemma 1.5]{AA1}, $x$ can be chosen as a
linear combination of vertices and elements of the form $pq^\ast$, with $p, q\in E^\ast$.

This alternative description of $L(E)$ allows us to define, for ${\overline x} \in L(E)$, the following
$$\mathcal{R}_{\overline x}=\left\{\sum p_iq_i^\ast \in K\widehat{E} \quad \vert \quad \overline{x} =
\overline{\sum p_iq_i^\ast}\right\}.$$ Consider an element $a=e_1\dots e_rf_1^\ast\dots f_s^\ast\in K\widehat{E}$, with $e_i, f_j \in E^1$. We say
that $s$ is the \textit{degree} of $e_1\dots e_rf_1^\ast\dots f_s^\ast$ \textit{in ghost edges}, and denote it by $\textrm{degge}(a)$. If $a\in KE$,
then we say that $a$ has zero degree in ghost edges, while the \textit{degree in ghost edges} of $f_1^\ast\dots f_s^\ast$ is $s$. For $a\in
K\widehat{E}$, $a=\sum_i p_iq_i$, with $p_i, q_i^\ast\in E^\ast$, the \textit{degree} of $a$ \textit{in ghost edges} is:
$max\{\textrm{degge}(p_iq_i^\ast)\}$. Finally, the \textit{degree in ghost edges} of an element $\overline x$ of the Leavitt path algebra $L(E)$ is
defined by:
$$\textrm{degge}(\overline{x}):= min \{\textrm{degge}(y)\quad \vert \quad y\in \mathcal{R}_{\overline x}\}.$$

\section{Prime Ideals and Maximal Tails}

The main goal of this section is the study maximal tails and their relation with prime (graded or not) ideals of $L(E)$. These connections will be
essential in the prime spectrum correspondence results (Theorem \ref{correspondenciatotal}).

Let us recall first the definition of maximal tail (which is a particular case of that of \cite{BHRS}): for a graph $E$, a nonempty subset
$M\subseteq E^0$ is said to be a \emph{maximal tail} if it satisfies the following properties:

\begin{enumerate}

\item[(MT1)] If $v\in E^0$, $w\in M$ and $v\geq w$, then $v\in M$.
\item[(MT2)] If $v\in M$ with $s^{-1}(v)\ne \emptyset$, then there exists $e\in E^1$ with $s(e)=v$ and $r(e)\in M$.
\item[(MT3)] For every $v,w\in M$ there exists $y\in M$ such that $v\geq y$ and $w\geq y$.
\end{enumerate}

\smallskip

\begin{lemma}\label{maximaltailhersat}
Let $E$ be a graph. Then, $M\subseteq E^0$ satisfies Conditions {\rm (MT1)} and {\rm (MT2)} if and only if $H=E^0\setminus M\in {\mathcal H}_E$.
\end{lemma}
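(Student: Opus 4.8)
The plan is to show that, under the complementation $H = E^0 \setminus M$, Condition (MT1) corresponds exactly to $H$ being hereditary and Condition (MT2) corresponds exactly to $H$ being saturated. Since membership $H \in {\mathcal H}_E$ means by definition that $H$ is both hereditary and saturated, establishing these two biconditionals separately and conjoining them proves the lemma. Each equivalence is obtained by writing the condition in question as a universally quantified implication and passing to its contrapositive; no computation is involved, so the work is entirely a careful translation between assertions about membership in $M$ and the negated assertions about membership in $H$.

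First I would treat (MT1) against hereditariness. The hereditary condition for $H$ states that for all $v, w \in E^0$, if $v \in H$ and $v \geq w$ then $w \in H$. Substituting $H = E^0 \setminus M$ turns this into $(v \notin M \text{ and } v \geq w) \Rightarrow w \notin M$, whose contrapositive is precisely $(w \in M \text{ and } v \geq w) \Rightarrow v \in M$, that is, (MT1). Hence $M$ satisfies (MT1) if and only if $H$ is hereditary.

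Next I would treat (MT2) against saturation. The one point demanding care is decoding the inclusion $r(s^{-1}(v)) \subseteq H$: it asserts that \emph{every} edge emitted by $v$ has its range in $H$, so its negation asserts that \emph{some} edge emitted by $v$ has its range in $M$. With this reading the saturation condition, namely that $s^{-1}(v) \neq \emptyset$ together with $r(s^{-1}(v)) \subseteq H$ force $v \in H$, rewrites after substitution as
\[
\big(s^{-1}(v) \neq \emptyset \ \text{ and }\ \nexists\, e\in E^1 \text{ with } s(e)=v \text{ and } r(e)\in M\big) \Rightarrow v \notin M,
\]
and its contrapositive is exactly the statement that $v \in M$ with $s^{-1}(v) \neq \emptyset$ forces the existence of $e \in E^1$ with $s(e) = v$ and $r(e) \in M$, i.e. (MT2). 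Thus $M$ satisfies (MT2) if and only if $H$ is saturated.

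The hard part, such as it is, lies only in this last step: the hypothesis $s^{-1}(v) \neq \emptyset$ occurs on the antecedent side of the implication in both the saturation and the (MT2) formulations, so it must be tracked carefully through the contrapositive, ensuring the quantifier over edges is negated correctly while this hypothesis is preserved. Once the two biconditionals are in hand, I conclude that $M$ satisfies both (MT1) and (MT2) if and only if $H = E^0 \setminus M$ is simultaneously hereditary and saturated, i.e. $H \in {\mathcal H}_E$, which is the assertion of the lemma.
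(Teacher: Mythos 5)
Your proof is correct and takes essentially the same route as the paper: the paper's proof-by-contradiction arguments in each direction are precisely the contrapositive translations you write down. The only difference is organizational --- you package the argument as two independent biconditionals ($M$ satisfies (MT1) $\Leftrightarrow$ $H$ is hereditary, and $M$ satisfies (MT2) $\Leftrightarrow$ $H$ is saturated) rather than as two implications each treating both conditions, which is a cosmetic rearrangement of the same logic.
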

\begin{proof}
Suppose first that $M$ is a maximal tail. Consider $v\in H$ and $w\in E^0$ such that $v\geq w$. If $w\not\in H$ then $w\in M$, and by Condition (MT1)
we get $v\in M=E^0\setminus H$, a contradiction. This shows that $H$ is hereditary. Now, let $v\in E^0$ with $s^{-1}(v)\neq \emptyset$, and suppose
that $r(s^{-1}(v))\subseteq H$. If $v\not\in H$ then by Condition (MT2), there exists $e\in s^{-1}(v)$ such that $r(e)\not\in H$, a contradiction.
This proves that $H$ is saturated.

Let us see the converse. Take $v\in E^0$ and $w\in M$ such that $v\geq w$. If $v\not\in M$ then, as $H$ is hereditary, we get that $w\in H$. Consider
now $v\in M$ with $s^{-1}(v)\neq \emptyset$. If for every $e\in s^{-1}(v)$ we have that $r(e)\not\in M$, then that means $r(s^{-1}(v))\subseteq H$,
and by saturation we obtain $v\in H$, a contradiction.
\end{proof}

\begin{notation} {\rm Following \cite{BHRS}, given $X\subseteq E^0$ we denote $$\Omega(X)=\{w\in E^0\setminus X\ |\ w\not\geq v \text{ for every }v\in X
\}.$$}
\end{notation}

\begin{lemma}\label{mtone} If $M\subseteq E^0$ satisfies Condition {\rm (MT1)}, then $\Omega(M)=E^0\setminus M$.
\end{lemma}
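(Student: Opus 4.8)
The plan is to prove the two inclusions separately, noting that one of them is essentially built into the definition. By the very definition of $\Omega(M)$, every element of $\Omega(M)$ is taken from $E^0\setminus M$, so the inclusion $\Omega(M)\subseteq E^0\setminus M$ is immediate and requires no hypothesis on $M$. Thus the entire content of the lemma lies in the reverse inclusion $E^0\setminus M\subseteq \Omega(M)$, and this is where Condition (MT1) enters.

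To establish the reverse inclusion, I would fix an arbitrary $w\in E^0\setminus M$ and show that $w\in \Omega(M)$; since we already know $w\in E^0\setminus M$, it remains only to verify the reachability condition, namely that $w\not\geq v$ for every $v\in M$. I would argue by contradiction: suppose there were some $v\in M$ with $w\geq v$. Then Condition (MT1), applied to the pair $(w,v)$ with $v\in M$ and $w\geq v$, would force $w\in M$, contradicting the assumption $w\in E^0\setminus M$. Hence no such $v$ exists, $w$ reaches no vertex of $M$, and therefore $w\in \Omega(M)$.

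Combining the two inclusions yields $\Omega(M)=E^0\setminus M$, as desired. There is no real obstacle here: the only step of substance is recognizing that the defining property of $\Omega(M)$ is precisely the contrapositive of (MT1), so that (MT1) guarantees every vertex outside $M$ fails to reach into $M$. One can also phrase the argument directly rather than by contradiction, by reading (MT1) in contrapositive form as ``$w\notin M$ and $w\geq v$ imply $v\notin M$,'' which instantly gives $w\not\geq v$ for all $v\in M$; I would likely present whichever phrasing reads most cleanly in context.
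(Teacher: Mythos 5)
Your proof is correct and follows essentially the same route as the paper's: the inclusion $\Omega(M)\subseteq E^0\setminus M$ holds by definition, and the reverse inclusion follows by applying Condition (MT1) to any $w\in E^0\setminus M$ with $w\geq v$ for some $v\in M$, yielding the contradiction $w\in M$. No gaps; the contrapositive rephrasing you mention at the end is a fine stylistic alternative but adds nothing essential.
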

\begin{proof}
By definition $\Omega(M)\subseteq E^0\setminus M$. Now, let $w\in E^0\setminus M$. If $v\in M$ and $w\geq v$, by Condition (MT1) we get that $w\in
M$, a contradiction, so $E^0\setminus M\subseteq \Omega(M)$, as desired.
\end{proof}

\begin{corollary}\label{maximaltailomega} Let $E$ be a graph. If $M\subseteq E^0$ satisfies Conditions {\rm (MT1)} and {\rm (MT2)}, then
$\Omega(M)\in {\mathcal H}_E$.
\end{corollary}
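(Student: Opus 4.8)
The plan is to combine the two results that immediately precede the statement, since between them they already isolate the two facts we need. Recall that $\Omega(M)\in{\mathcal H}_E$ means precisely that $\Omega(M)$ is hereditary and saturated, so the entire task reduces to identifying $\Omega(M)$ with a set already known to lie in ${\mathcal H}_E$.

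First I would invoke Lemma \ref{mtone}. Since $M$ satisfies Condition (MT1) by hypothesis, that lemma gives the set-theoretic identity $\Omega(M)=E^0\setminus M$. This is the key reduction: it replaces the somewhat opaque set $\Omega(M)$, defined through the ``no path into $M$'' condition, by the straightforward complement $E^0\setminus M$.

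Next I would apply Lemma \ref{maximaltailhersat}. Since $M$ satisfies both Conditions (MT1) and (MT2) by hypothesis, the ``only if'' direction of that lemma yields $H:=E^0\setminus M\in{\mathcal H}_E$. Chaining the two facts together, $\Omega(M)=E^0\setminus M=H\in{\mathcal H}_E$, which is exactly the assertion of the corollary.

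I expect no genuine obstacle here: the result is a formal consequence of Lemmas \ref{mtone} and \ref{maximaltailhersat}, and the proof amounts to substituting one equality into the other. The only point demanding a little care is bookkeeping of hypotheses, namely checking that Lemma \ref{mtone} needs only (MT1) while Lemma \ref{maximaltailhersat} needs both (MT1) and (MT2); both requirements are met, so the two lemmas apply directly and the corollary follows.
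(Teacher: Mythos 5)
Your proof is correct and is exactly the paper's argument: the paper's proof of this corollary simply reads ``Apply Lemmas \ref{maximaltailhersat} and \ref{mtone},'' which is precisely the two-step substitution you spell out. No gaps; your hypothesis bookkeeping (MT1 for Lemma \ref{mtone}, both MT1 and MT2 for Lemma \ref{maximaltailhersat}) matches what the lemmas require.
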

\begin{proof}
Apply Lemmas \ref{maximaltailhersat} and \ref{mtone}.
\end{proof}

Recall that a graded ideal $I$ of a graded ring $R$ is said to be
\emph{graded prime} if for every pair of graded ideals $J, K$ of $R$
such that $JK\subseteq I$, it is necessary that either $J\subseteq
I$ or $K\subseteq I$. The definition of prime ideal is analogous to
the previous one by eliminating the condition of being graded. It
follows by \cite[Proposition II.1.4]{NvO} that for an algebra graded
by an ordered group (as it is the case of Leavitt path algebras), a
graded ideal is graded prime if and only if it is prime.

It will be useful to recall that in \cite[Remark 5.5]{APS} it was shown that if $J,K\in \mathcal{H}_E$, then $I(J)I(K)=I(J\cap K)$. We will use this
fact without referencing it.

For the sake of completion, we re-state here the following proposition:

\begin{proposition}\label{gradedprime} {\rm (\cite[Proposition 5.6]{APS})}
Let $E$ be a graph, and let $H\in \mathcal{H}_E$. Then, the following are equivalent:
\begin{enumerate}
\item The ideal $I(H)$ is (graded) prime.
\item $M=E^0\setminus H$ is a maximal tail.
\end{enumerate}
\end{proposition}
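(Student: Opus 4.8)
The plan is to translate primeness of $I(H)$ into a purely lattice-theoretic condition on $\mathcal{H}_E$ and then match that condition against (MT3). By the cited result of \cite{NvO}, $I(H)$ is prime if and only if it is graded prime, so I only need to test the defining implication on graded ideals. Every graded ideal has the form $I(J)$ with $J\in\mathcal{H}_E$, the assignment $J\mapsto I(J)$ is inclusion preserving and reflecting (so $I(J)\subseteq I(H)$ iff $J\subseteq H$), and $I(J)I(K)=I(J\cap K)$. Hence $I(H)$ is (graded) prime exactly when $H\neq E^0$ and, for all $J,K\in\mathcal{H}_E$, the inclusion $J\cap K\subseteq H$ forces $J\subseteq H$ or $K\subseteq H$. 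Here $H\neq E^0$ corresponds to $M\neq\emptyset$, which is part of the definition of a maximal tail, and by Lemma \ref{maximaltailhersat} the set $M$ automatically satisfies (MT1) and (MT2) because $H\in\mathcal{H}_E$; so the real content is the equivalence between the displayed lattice condition and (MT3).

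For the implication (2)$\Rightarrow$(1) I would argue directly. Suppose $M$ is a maximal tail and take $J,K\in\mathcal{H}_E$ with $J\cap K\subseteq H$ but $J\not\subseteq H$ and $K\not\subseteq H$. Pick $v\in J\cap M$ and $w\in K\cap M$. By (MT3) there is $y\in M$ with $v\ge y$ and $w\ge y$; since $J$ and $K$ are hereditary, $y\in J\cap K\subseteq H$, contradicting $y\in M=E^0\setminus H$. Hence $J\subseteq H$ or $K\subseteq H$, and $I(H)$ is prime.

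For (1)$\Rightarrow$(2) I would prove the contrapositive. If $M=\emptyset$ then $I(H)=L(E)$ is not proper; otherwise, if (MT3) fails there are $v,w\in M$ with $T(v)\cap T(w)\cap M=\emptyset$, that is, $T(v)\cap T(w)\subseteq H$. I propose the witnesses $J=\overline{\{v\}}$ and $K=\overline{\{w\}}$: both lie in $\mathcal{H}_E$ and neither is contained in $H$ (they contain $v,w\in M$), so the main obstacle is precisely to show $\overline{\{v\}}\cap\overline{\{w\}}\subseteq H$. This is the heart of the matter, since a priori the saturation steps building the two closures could introduce a vertex of $M$ lying in both closures without lying below both $v$ and $w$. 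I would rule this out by a minimality argument. Writing $\rho_v(u)$ and $\rho_w(u)$ for the least indices with $u\in\Lambda_{\rho_v(u)}(\{v\})$ and $u\in\Lambda_{\rho_w(u)}(\{w\})$, suppose some $u\in\overline{\{v\}}\cap\overline{\{w\}}\cap M$ minimizes $\rho_v(u)+\rho_w(u)$. If both indices vanish then $u\in T(v)\cap T(w)\subseteq H$, impossible. If exactly one vanishes, say $\rho_v(u)=0<\rho_w(u)$, then $v\ge u$ and every edge out of $u$ lands in $\Lambda_{\rho_w(u)-1}(\{w\})$, so the (MT2)-edge $u\to t$ with $t\in M$ yields $t\in\overline{\{v\}}\cap\overline{\{w\}}\cap M$ with strictly smaller index sum. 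If both indices are positive, then every edge out of $u$ lands in $\Lambda_{\rho_v(u)-1}(\{v\})\cap\Lambda_{\rho_w(u)-1}(\{w\})$, so again the (MT2)-edge stays inside the intersection while dropping both indices. Each case contradicts minimality, whence $\overline{\{v\}}\cap\overline{\{w\}}$ meets $M$ trivially, i.e. $\overline{\{v\}}\cap\overline{\{w\}}\subseteq H$. Consequently $I(J)I(K)=I(\overline{\{v\}}\cap\overline{\{w\}})\subseteq I(H)$ while $I(J),I(K)\not\subseteq I(H)$, so $I(H)$ is not prime, completing the contrapositive.

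I expect the only nontrivial step to be this containment $\overline{\{v\}}\cap\overline{\{w\}}\subseteq H$: everything else is the formal dictionary between graded ideals and $\mathcal{H}_E$ together with a one-line use of (MT3). The subtle point is that (MT2) is exactly what forces an out-edge back into $M$, and in the doubly-positive-rank case the saturation hypothesis guarantees that \emph{every} out-edge (hence the chosen (MT2)-edge) remains in both closures, which is what makes the induction descend and close.
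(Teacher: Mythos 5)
Your proof is correct, but note that the paper itself does not prove this proposition at all: it is quoted verbatim from \cite[Proposition 5.6]{APS}, so your argument is necessarily a different (and self-contained) route. What you do is reconstruct the result using only facts already stated in the paper: the reduction of primeness to graded primeness via \cite[Proposition II.1.4]{NvO}, the dictionary between graded ideals and $\mathcal{H}_E$ (including $I(J)I(K)=I(J\cap K)$ and the inclusion-reflecting property coming from $I(J)\cap E^0=J$), and Lemma \ref{maximaltailhersat} to dispose of (MT1) and (MT2). The easy direction, using (MT3) and hereditariness of $J$ and $K$ to produce a vertex $y\in J\cap K\cap M$, is the standard one. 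The genuinely nontrivial contribution is your treatment of the converse: when (MT3) fails for $v,w\in M$ you must show $\overline{\{v\}}\cap\overline{\{w\}}\subseteq H$, and you are right that this does not follow formally from $T(v)\cap T(w)\subseteq H$, since the saturation steps could in principle inject new vertices of $M$ into both closures. Your double-descent argument on the filtration $\Lambda_n$ --- choosing $u\in\overline{\{v\}}\cap\overline{\{w\}}\cap M$ minimizing $\rho_v(u)+\rho_w(u)$, observing that any positive index forces $u$ to be saturation-new so that \emph{every} out-edge lands one level down, and then using the (MT2)-edge into $M$ to strictly decrease the sum --- is airtight in all three cases (the case $\rho_v(u)=\rho_w(u)=0$ contradicting $T(v)\cap T(w)\subseteq H$ directly). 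The only point worth flagging is the properness convention: the paper's stated definition of (graded) prime does not explicitly require $I\neq L(E)$, so your identification of $H\neq E^0$ with $M\neq\emptyset$ silently adopts the standard convention that prime ideals are proper; this matches the intended reading (maximal tails are nonempty by definition) and is harmless. Your argument makes the paper self-contained at this point, which the citation to \cite{APS} does not.
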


The following definitions can be found in \cite{BHRS}.

\begin{definitions} {\rm Let $M$ be a subset of $E$. A path in $M$ is a path $\alpha$ in $E$ with $\alpha^0\subseteq M$. We say that a path $\alpha$ in
$M$ has \emph{an exit in $M$} if there exits $e\in E^1$ an exit for $\alpha$ such that $r(e)\in M$. For a graph $E$, we denote by ${\mathcal M}(E)$
the set of maximal tails of $E$. We denote by ${\mathcal M}_\gamma (E)$ the set of maximal tails $M$ such that every closed simple path $p$ in $M$
has an exit in $M$. We will also denote ${\mathcal M}_\tau(E)={\mathcal M}(E)\setminus{\mathcal M}_\gamma(E)$.}
\end{definitions}

The following notation will be useful throughout the sequel.

\begin{notation} {\rm Keeping in mind that gauge-invariant ideals in graph C*-algebras correspond to graded ideals in Leavitt path algebras, we can
adapt some notation of \cite{HS} to our situation. Concretely, given a ${\mathbb Z}$-graded algebra $A$, we will denote by $\Spec_\gamma(A)$ the set
of all prime ideals of $A$ which are graded, and by $\Spec_\tau(A)$ the set of all prime ideals $L(E)$ which are not graded. Then
$\Spec(A)=\Spec_\gamma(A)\cup\Spec_\tau(A)$. As usual, we denote by $\Spec(A)^*$ the set $\Spec(A)\setminus\{0\}$.}
\end{notation}

\begin{lemma}\label{Mgamma} Let $E$ be a graph. Let $I$ be an ideal of $L(E)$. Let $H=I\cap E^0$ and $M=E^0\setminus H$.
If $M\in {\mathcal M}_\gamma(E)$ then $I=I(H)$.
\end{lemma}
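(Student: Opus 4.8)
The plan is to reduce the statement to a uniqueness property of graphs satisfying Condition (L) by passing to the quotient graph $E/H$. Since $M\in{\mathcal M}_\gamma(E)\subseteq{\mathcal M}(E)$, the set $M$ satisfies (MT1) and (MT2), so Lemma \ref{maximaltailhersat} gives $H=E^0\setminus M\in{\mathcal H}_E$. Because $H\subseteq I$ and $I(H)$ is by definition the smallest ideal containing $H$, the inclusion $I(H)\subseteq I$ is immediate; the entire content of the lemma is the reverse inclusion $I\subseteq I(H)$, which I would prove by showing that the image of $I$ in the quotient $L(E)/I(H)$ vanishes.

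First I would invoke the standard identification $L(E)/I(H)\cong L(E/H)$ (see \cite{AMP}), under which $E/H$ has vertex set $(E/H)^0=M$ and edge set $\{e\in E^1\mid r(e)\in M\}$, each vertex $v\in M$ being sent to the nonzero idempotent it determines. Write $\overline{I}$ for the image of $I$; it is an ideal of $L(E/H)$. A vertex $v\in M$ lies in $\overline{I}$ precisely when $v\in I+I(H)=I$, that is, when $v\in I\cap E^0=H$; since no vertex of $M$ lies in $H$, the ideal $\overline{I}$ contains no vertex of $E/H$.

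The key point is then to translate the hypothesis $M\in{\mathcal M}_\gamma(E)$ into a property of $E/H$. A closed simple path all of whose vertices lie in $M$ has every edge ranging into $M$, hence is exactly a closed simple path of $E/H$; and an exit $e$ for such a path satisfies $r(e)\in M$ if and only if $e\in(E/H)^1$, so ``having an exit in $M$'' coincides with ``having an exit in $E/H$''. Thus the requirement that every closed simple path in $M$ has an exit in $M$ says precisely that $E/H$ satisfies Condition (L). With this in hand, I would apply the Cuntz--Krieger-type uniqueness fact that in a graph satisfying Condition (L) every nonzero two-sided ideal contains a vertex (the argument underlying the simplicity criterion of \cite{AA1}). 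Since $\overline{I}$ contains no vertex, it must be zero, whence $I\subseteq I(H)$ and so $I=I(H)$.

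The step I expect to be the real obstacle is the last one: justifying that a nonzero ideal of $L(E/H)$ must meet $(E/H)^0$. If that uniqueness statement is not available to cite, I would prove it directly by the usual reduction argument: given $0\neq y\in\overline{I}$, one produces paths $\mu,\nu$ such that $\mu^\ast y\nu$ is either a nonzero scalar multiple of a vertex or a nonzero polynomial in a cycle without exit, and Condition (L) rules out the latter, leaving a vertex inside $\overline{I}$. The remaining ingredients---the hereditary saturation of $H$, the quotient identification, and the path/exit bookkeeping in the translation step---are routine.
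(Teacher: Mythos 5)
Your proposal is correct and follows essentially the same route as the paper: both pass to the quotient identification $L(E)/I(H)\cong L(E/H)$, observe that $M\in\mathcal{M}_\gamma(E)$ forces $E/H$ to satisfy Condition (L), note that the image of $I$ contains no vertex of $E/H$, and conclude via the standard fact (\cite[Proposition 6]{AA2}, i.e.\ the Cuntz--Krieger uniqueness argument) that this image must be zero. The only cosmetic differences are that you derive $H\in\mathcal{H}_E$ from Lemma \ref{maximaltailhersat} instead of \cite[Lemma 3.9]{AA1}, and you treat $H=\emptyset$ uniformly where the paper splits it off as a separate case.
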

\begin{proof}
First we suppose that $H$ is nonempty. By \cite[Lemma 3.9]{AA1} $H\in {\mathcal H}_E$, and by \cite[Lemma 2.3]{APS} $L(E)/I(H)\cong L(E/H)$. Clearly
$I(H)\subseteq I$. Suppose that $I(H)\neq I$, then $0\neq I/I(H)\triangleleft L(E/H)$. Note that, as $M\in {\mathcal M}_\gamma(E)$ by hypothesis,
then $E/H$ satisfies Condition (L). Thus, we are in a position to apply the same reasoning in \cite[Proposition 3.3]{APS} to reach a contradiction.

In the case when $H=\emptyset$, the condition $M\in {\mathcal
M}_\gamma(E)$ is just having Condition (L) in the graph $E$. Then,
if $I\ne 0$, an application of \cite[Proposition 6]{AA2} yields
$H\neq \emptyset$, a contradiction.
\end{proof}

\begin{lemma}\label{nongradedgivesmaximaltailtau} Let $E$ be a graph. Let $I$ be a non-graded prime ideal of $L(E)$.
Let $H=I\cap E^0$, then:
\begin{enumerate}[{\rm (i)}]
\item $I(H)\triangleleft L(E)$ is (graded) prime.
\item $M=E^0\setminus H\in {\mathcal M}_\tau(E)$.
\end{enumerate}
\end{lemma}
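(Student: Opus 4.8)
The plan is to reduce everything to showing that $M=E^0\setminus H$ is a maximal tail, and then to peel off the graded from the non-graded information. First I would note that $H=I\cap E^0\in\mathcal H_E$ by \cite[Lemma 3.9]{AA1}, so Lemma \ref{maximaltailhersat} gives at once that $M$ satisfies (MT1) and (MT2). The entire content of part (i) is therefore to verify (MT3), and this is the step where primeness of $I$ is used and where I expect the only real difficulty to lie.

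For (MT3), fix $v,w\in M$. Since $v,w\notin H=I\cap E^0$ we have $v,w\notin I$, so the ideals $I(\overline{\{v\}})$ and $I(\overline{\{w\}})$ (graded, as they are generated by idempotents) are not contained in $I$. Primeness of $I$ then forces $I(\overline{\{v\}})\,I(\overline{\{w\}})\not\subseteq I$, and by the product formula $I(J)I(K)=I(J\cap K)$ for $J,K\in\mathcal H_E$, applied to $\overline{\{v\}},\overline{\{w\}}\in\mathcal H_E$, this says $I(\overline{\{v\}}\cap\overline{\{w\}})\not\subseteq I$. Because a graded ideal $I(G)$ with $G\in\mathcal H_E$ satisfies $I(G)\subseteq I$ iff $G\subseteq H$, we conclude $\overline{\{v\}}\cap\overline{\{w\}}\not\subseteq H$, i.e.\ there is a vertex $u\in M\cap\overline{\{v\}}\cap\overline{\{w\}}$.

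The delicate point is to pass from membership in a hereditary saturated closure back to the reachability relation $\ge$, since $\overline{\{v\}}$ is built from $T(v)$ by the saturation steps $\Lambda_n$ and so $u\in\overline{\{v\}}$ need not give $v\ge u$. I would isolate this as an \emph{auxiliary claim}: if $M$ satisfies (MT1)--(MT2), $u\in M$, and $u\in\overline{\{x\}}$, then there is $y\in M$ with $x\ge y$ and $u\ge y$. This is proved by induction on the least $n$ with $u\in\Lambda_n(\{x\})$: the base case $u\in T(x)$ is trivial ($y=u$), and in the inductive step, when $u$ is adjoined by saturation, $s^{-1}(u)\neq\emptyset$ and (MT2) produce an edge $e$ with $s(e)=u$ and $r(e)\in M\cap\Lambda_{n-1}(\{x\})$, to which the inductive hypothesis applies. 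Applying the claim to $x=v$ yields $y_1\in M$ with $v\ge y_1$ and $u\ge y_1$; since $\overline{\{w\}}$ is hereditary, $u\in\overline{\{w\}}$, $u\ge y_1$ give $y_1\in\overline{\{w\}}$, and a second application (to $x=w$ and the vertex $y_1\in M\cap\overline{\{w\}}$) produces $y\in M$ with $w\ge y$ and $y_1\ge y$, whence $v\ge y_1\ge y$. This $y$ witnesses (MT3). Thus $M$ is a maximal tail, and Proposition \ref{gradedprime} gives part (i).

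For part (ii) it remains only to exclude $M\in\mathcal M_\gamma(E)$, and here I would use Lemma \ref{Mgamma} contrapositively: were $M\in\mathcal M_\gamma(E)$, that lemma would force $I=I(H)$, contradicting that $I$ is non-graded while $I(H)$ is graded. Hence $M\in\mathcal M(E)\setminus\mathcal M_\gamma(E)=\mathcal M_\tau(E)$. The single step requiring genuine care is the auxiliary claim bridging $\overline{\{x\}}$ and $\ge$; the rest is a direct application of the definition of primeness, the product formula for graded ideals, and the two cited lemmas.
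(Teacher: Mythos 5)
Your proof is correct, but it runs in the opposite direction to the paper's. The paper proves (i) first and directly, by a three-line ideal-theoretic argument: if $I_1I_2\subseteq I(H)$ with $I_i=I(H_i)$ graded, then since $I(H)\subseteq I$ and $I$ is prime, some $I(H_i)\subseteq I$, whence $H_i\subseteq I\cap E^0=H$ and $I(H_i)\subseteq I(H)$; the maximal-tail statement in (ii) then falls out of Proposition \ref{gradedprime} in the direction (prime $\Rightarrow$ maximal tail). You instead prove the maximal-tail property first -- (MT1)/(MT2) from Lemma \ref{maximaltailhersat}, and (MT3) by combining primeness of $I$ with the product formula $I(J)I(K)=I(J\cap K)$ and your auxiliary induction on the saturation steps $\Lambda_n$ -- and only then deduce (i) from the converse direction of Proposition \ref{gradedprime}. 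Both arguments are sound and both use Lemma \ref{Mgamma} identically to rule out $M\in\mathcal{M}_\gamma(E)$. The trade-off: the paper's route is shorter because the delicate combinatorial bridge between hereditary saturated closures and the reachability relation $\ge$ (exactly your auxiliary claim) is absorbed into the cited \cite[Proposition 5.6]{APS}, whereas you effectively inline a proof of one direction of that proposition, applied to the non-graded ideal $I$ itself; this makes your argument longer but more self-contained, and your auxiliary claim (with its induction on the least $n$ with $u\in\Lambda_n(\{x\})$, using (MT2) to descend) is a correct and reusable combinatorial fact.
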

\begin{proof}
(i). By \cite[Lemma 3.9]{AA1} we know that $H\in {\mathcal H}_E$. Now, consider graded ideals $I_1,I_2$ of $L(E)$ such that $I_1I_2\subseteq I(H)$.
Find $H_i\in {\mathcal H}_E$ with $I_i=I(H_i)$, for $i=1,2$. As $I(H)\subseteq I$ and $I$ is prime, we have that $I(H_i)\subseteq I$, for some $i$.
Then, for this $i$ we get $H_i\subseteq I(H_i)\cap E^0\subseteq I\cap E^0=H$, so that $I(H_i)\subseteq I(H)$, as we wanted.

\medskip

(ii). Apply (i) and Proposition \ref{gradedprime} to get that $M$ is a maximal tail. If $M\in {\mathcal M}_\gamma(E)$, then Lemma \ref{Mgamma} gives
that $I=I(H)$, contradicting the fact that $I$ is not graded.
\end{proof}

We end this section by providing algebraic characterizations of Condition (L) and Conditions (L) plus (MT3), that will appear in the sequel. First we
need the following definitions, which are particular cases of those appearing in \cite[Definition 1.3]{DHSz}:

Let $E$ be a graph, and let $\emptyset \ne H\in \mathcal{H}_E$. Define
$$F_E(H)=\{ \alpha =(\alpha_1, \dots ,\alpha_n)\mid \alpha _i\in
E^1, s(\alpha _1)\in E^0\setminus H, r(\alpha _i)\in E^0\setminus H \mbox{ for } i<n, r(\alpha _n)\in H\}.$$ Denote by $\overline{F}_E(H)$ another
copy of $F_E(H)$. For $\alpha\in F_E(H)$, we write $\overline{\alpha}$ to denote a copy of $\alpha$ in $\overline{F}_E(H)$. Then, we define the graph
${}_HE=({}_HE^0, {}_HE^1, s', r')$ as follows:
\begin{enumerate}

\item ${}_HE^0=({}_HE)^0=H\cup F_E(H)$. \item ${}_HE^1=({}_HE)^1=\{ e\in
E^1\mid s(e)\in H\}\cup \overline{F}_E(H)$.
\item For every $e\in E^1 \mbox{ with } s(e)\in H$, $s'(e)=s(e)$ and $r'(e)=r(e)$. \item For every $\overline{\alpha}\in
\overline{F}_E(H)$,  $s'(\overline{\alpha})=\alpha$ and $r'(\overline{\alpha})=r(\alpha)$.
\end{enumerate}

\begin{proposition}\label{characterizations} Let $E$ be a graph.
\begin{enumerate}[{\rm (i)}]
\item $E$ satisfies Conditions {\rm (L)} and {\rm (MT3)} if and only if $I\cap J\cap E^0\neq \emptyset$ for
every nonzero ideals $I$ and $J$ of $L(E)$.
\item $E$ satisfies Condition {\rm (L)} if and only if $I\cap E^0\neq \emptyset$ for every nonzero ideal $I$ of $L(E)$.
\end{enumerate}
\end{proposition}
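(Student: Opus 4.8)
The plan is to establish (ii) first and then derive (i) from it, routing the delicate half of (i) through primeness so as to avoid comparing hereditary saturated closures directly.

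For (ii), the forward implication is exactly \cite[Proposition 6]{AA2} (the tool already used in Lemma~\ref{Mgamma}): if $E$ has Condition~(L) and $I\neq 0$, then $I\cap E^0\neq\emptyset$. I would prove the converse contrapositively. If (L) fails, pick a cycle $c=e_1\cdots e_n$ with no exit, based at $v=s(c)$; absence of an exit forces $T(v)$ to be exactly the vertex set of $c$, and the Cuntz--Krieger relations give $cc^\ast=c^\ast c=v$, so the corner $vL(E)v$ is isomorphic to $K[x,x^{-1}]$ via $v\mapsto 1,\ c\mapsto x,\ c^\ast\mapsto x^{-1}$. Put $I=\langle v-c\rangle$; it is nonzero since $v\neq c$. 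Because $v-c=v(v-c)v$ lies in the corner, $a(v-c)b=(av)(v-c)(vb)$ for all $a,b\in L(E)$, whence $vIv=(vL(E)v)(v-c)(vL(E)v)$ is the ideal of $K[x,x^{-1}]$ generated by $1-x$, which is proper; thus $v\notin I$. I expect the genuinely technical step to be upgrading $v\notin I$ to $I\cap E^0=\emptyset$: writing $L(E)v=\mathrm{span}\{pq^\ast: s(q)=v\}$ and $vL(E)=\mathrm{span}\{pq^\ast: s(p)=v\}$ and using that every path issuing from $v$ stays on $c$, one checks that every monomial appearing in an element of $I$ has all of its vertices in $D=\{u\in E^0: u\ge v\}$, so $H:=I\cap E^0\subseteq D$. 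Since $H\in\mathcal{H}_E$ is hereditary, any $w\in H$ would satisfy $w\ge v$ and hence $v\in T(w)\subseteq H$, i.e.\ $v\in I$, a contradiction; so $H=\emptyset$. Verifying that this vertex-support condition is stable under the Cuntz--Krieger reductions is the one point that needs care.

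For the converse of (i), assume $I\cap J\cap E^0\neq\emptyset$ for all nonzero ideals $I,J$. Taking $J=I$ gives $I\cap E^0\neq\emptyset$ for every nonzero $I$, so (L) holds by (ii). For (MT3), note that a vertex $x\in I\cap J\cap E^0$ satisfies $x=x\cdot x\in IJ$ with $x\neq 0$, so $IJ\neq 0$; hence $L(E)$ is prime, i.e.\ the zero ideal $I(\emptyset)$ is prime, equivalently graded prime by \cite[Proposition II.1.4]{NvO}. Proposition~\ref{gradedprime} with $H=\emptyset$ then says $E^0=E^0\setminus\emptyset$ is a maximal tail; as (MT1) and (MT2) are automatic for $M=E^0$, this is exactly (MT3).

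For the forward direction of (i), assume (L) and (MT3) and take nonzero ideals $I,J$. By (ii) there are vertices $v\in I\cap E^0$ and $w\in J\cap E^0$, and (MT3) supplies $y\in E^0$ with $v\ge y$ and $w\ge y$. For a path $\mu$ from $v$ to $y$ we have $y=\mu^\ast\mu=\mu^\ast v\mu\in I$, and symmetrically $y\in J$ (equivalently, use that $I\cap E^0$ and $J\cap E^0$ are hereditary by \cite[Lemma 3.9]{AA1}); hence $y\in I\cap J\cap E^0$, as required.
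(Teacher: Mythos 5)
The one genuine gap is the step you yourself flag: upgrading $v\notin I$ to $I\cap E^0=\emptyset$. As written, the assertion that ``every monomial appearing in an element of $I$ has all of its vertices in $D$'' is not a property of elements of $I$ at all: an element of $L(E)$ admits many different expressions as a $K$-linear combination of monomials, so a vertex $w$ could a priori lie in the span of monomials supported in $D$ without itself belonging to $D$. Hence your deduction $H=I\cap E^0\subseteq D$ does not follow from the spanning observation alone, and the ``stability under Cuntz--Krieger reductions'' you defer is exactly the missing proof. Fortunately it closes cheaply, in either of two ways. (a) Since $v-c=v(v-c)$, we have $I=\langle v-c\rangle\subseteq I(\{v\})$; the ideal $I(\{v\})$ is generated by an idempotent, hence graded, so by the correspondence between graded ideals and hereditary saturated sets recalled in the Preliminaries, $I(\{v\})\cap E^0=\overline{\{v\}}=\overline{c^0}$, and an easy induction on the sets $\Lambda_n$ defining the closure shows that every vertex of $\overline{c^0}$ connects to $v$, i.e.\ $\overline{c^0}\subseteq D$; now your hereditariness argument via \cite[Lemma 3.9]{AA1} finishes. (b) Alternatively, make the spanning claim itself do the work correctly: a product of two monomials in $L(E)$ is again a monomial or zero (only (CK1) is ever needed, never (CK2)), so $I=L(E)(v-c)L(E)$ genuinely is the $K$-span of monomials $\mu\nu^*$ with $s(\mu),s(\nu)\geq v$; if a vertex $w\not\geq v$ lay in that span, then $w=w\cdot w\cdot w$ would annihilate every summand, giving $w=0$, a contradiction. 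Either patch is a few lines and uses nothing beyond what the paper already quotes.

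With that repaired, your proof is correct and follows a genuinely different route from the paper's. The paper proves (i) first and self-contained: it extracts (MT3) from graded-primeness of the zero ideal (just as you do), then proves Condition (L) using the ideal generated by $v+c$, where (MT3) is what guarantees that any vertex in that ideal connects back to $v$. For the converse of (ii), where (MT3) is unavailable, the paper instead passes to $I(H)\cong L({}_{H}E)$ with $H=\overline{c^0}$ via \cite[Lemma 1.2]{AP} and \cite[Lemma 3.21]{T}, notes that ${}_{H}E$ is a comet by \cite[Proposition 3.6 (iii)]{AAPS} and hence satisfies (MT3), and reruns the argument of (i) inside that subalgebra. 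You invert the logical order: (ii) is proved directly --- the corner $vL(E)v\cong K[x,x^{-1}]$ (the same corner computation from \cite{APS} quoted in Lemma \ref{Mtaunotprimitive}) gives $v\notin\langle v-c\rangle$, and the patched support argument shows this ideal misses $E^0$ entirely, with no (MT3) needed --- and then (i) follows formally: taking $J=I$ gives (L) from (ii), while primeness plus Proposition \ref{gradedprime} gives (MT3). Your arrangement is shorter and avoids the paper's three external citations; its price is precisely the support lemma above, which the paper's detour through the comet graph was designed to sidestep.
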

\begin{proof}
(i). Suppose that $E$ satisfies Conditions (L) and (MT3) and take nonzero ideals $I$ and $J$ of $L(E)$. Apply \cite[Proposition 6]{AA2} to find
vertices $v\in I$ and $w\in J$. Use Condition (MT3) to find $u\in E^0$ such that $v,w\geq u$ and paths $\mu, \nu$ such that $s(\mu)=v$, $s(\nu)=w$
and $r(\mu)=r(\nu)=u$. Thus $u=\mu^*v\mu=\nu^*w\nu \in I\cap J\cap E^0$.

Let us see the converse. By Proposition \ref{gradedprime}, $E$ satisfies Condition (MT3). Suppose now that there is a cycle without exists $c$ based
at $v$. Let $J$ denote the ideal of $L(E)$ generated by $v+c$. By a standard argument (see \cite[Proof of Theorem 3.11]{AA1}) $v\not\in J$. If $w\in
J$ for some $w\in E^0$, as we have Condition (MT3), there exists $u$ such that $v,w\geq u$. Because $c$ has no exits $u\in c^0$, so that for some
path $\tau$, we have $\tau=w\tau v$. This gives $v=\tau^*w\tau \in J$, a contradiction.

(ii). Apply \cite[Proposition 6]{AA2} to show that Condition (L) implies $I\cap E^0\neq \emptyset$ for every nonzero ideal $I$ of $L(E)$.

To see the converse, suppose that $c$ is a cycle without exits and write $H=\overline{c^0}$. By \cite[Lemma 1.2]{AP} $L(_HE)\cong I(H)$ via an
isomorphism $\Phi:L(_HE)\to I(H)$ such that for every $v\in I(H)$, $v=\Phi(v)$. Take a nonzero ideal $J$ of $L(_HE)$, then $\Phi(J)$ is an ideal of
$I(H)$. By \cite[Lemma 3.21]{T}, $\Phi(J)$ is an ideal of $L(E)$. Use the hypothesis to show that $\Phi(J)$ contains a vertex $w$ which is in $I(H)$,
hence $w\in J$ because $\Phi(w)=w$. This shows that the graph $_HE$ satisfies that every ideal of $L(_HE)$ contains a vertex. On the other hand, as
shown in \cite[Proposition 3.6 (iii)]{AAPS}, $_HE$ is a comet tail. Thus, it satisfies Condition (MT3). Now, consider the ideal $J'$ of $L(_HE)$
generated by $v+c$. We can prove as in (i) that $J'$ does not contain vertices, a contradiction.
\end{proof}

\begin{remark} {\rm The fact that Condition (L) implies $I\cap E^0\neq \emptyset$ for every nonzero ideal $I$ of $L(E)$ was first proved (although not
explicitly stated in this form) in \cite[Proposition 6]{AA2}. Despite its simplicity, this is a recurrently invoked fact in a great number of proofs
that have followed. What Proposition \ref{characterizations} (ii) shows then is that the converse of this well-known statement holds too. In
addition, Proposition \ref{characterizations} (i) provides a generalization of this aforementioned result, which in turn happens to be equivalent to
the left (or right) semiprimitivity of $L(E)$, as will be shown in Theorem \ref{primitivos}.}
\end{remark}

\section{The prime spectrum correspondence}

In this section the computation of the prime spectrum of the Leavitt path algebra is completed. The bijection between the set of prime ideals of
$L(E)$ and certain families of maximal tails together with the set of nonzero prime ideals of $K[x,x^{-1}]$ is fully achieved in Theorem
\ref{correspondenciatotal}.

First we will need some preliminary results that will be useful tools in both directions of the correspondence of that Theorem.

As in \cite{AAPS}, we denote by $P_c(E)$ the set of vertices in the cycles without exits of $E$.

\begin{lemma}\label{casoKE} Let $E$ be a graph and $J$ an ideal of $L(E)$ such that $J\cap E^0=\emptyset$.
Then $J\cap KE \cap L(E)u \subseteq I(P_c(E))$ for every $u\in E^0$.
\end{lemma}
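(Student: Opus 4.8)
The plan is to prove the statement by induction on $d$, the maximal length of a real path occurring in the given element, uniformly in $u$. Fix $a\in J\cap KE\cap L(E)u$ and write $a=\sum_i k_ip_i$ with the $p_i$ distinct real paths, $r(p_i)=u$ and $k_i\in K\setminus\{0\}$. Since $a=\sum_{w\in E^0}wa$ and each $wa$ again lies in $J\cap KE\cap L(E)u$, it suffices to treat each source-homogeneous piece, so I would assume all $p_i$ share one source $w$. The engine is the Cuntz--Krieger relation (4), which lets me peel the first edge off every path simultaneously: if $w$ emits edges and $wa$ carries \emph{no} length-$0$ term, then for each $g$ with $s(g)=w$ the element $g^{*}(wa)=\sum_{p_i=gp_i'}k_ip_i'$ is again a real-path element of $J\cap KE\cap L(E)u$ of degree $\le d-1$, and $wa=\bigl(\sum_{s(g)=w}gg^{*}\bigr)(wa)=\sum_{s(g)=w}g\,\bigl(g^{*}(wa)\bigr)$. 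By induction each $g^{*}(wa)\in I(P_c(E))$, hence $wa\in I(P_c(E))$. Thus the whole difficulty is concentrated in the appearance of a length-$0$ term: a vertex can only occur as $k_0u$, which forces $w=s=r=u$, i.e.\ $wa=ua$ is a combination of closed paths based at $u$ (otherwise $g^{*}$ produces the ghost $k_0g^{*}$ and leaves $KE$).

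So the crux is the closed-path element $b:=ua=k_0u+b_+$, where $b_+$ collects the terms of length $\ge 1$. If $u\in P_c(E)$ then every path based at $u$ runs through $u\in P_c(E)$, whence $b\in I(P_c(E))$ trivially. The remaining, and genuinely hard, case is $u\notin P_c(E)$, where I claim the coefficient $k_0$ must vanish; once this is known, $b$ has no length-$0$ term and the peeling reduction above applies verbatim to conclude $b\in I(P_c(E))$.

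To prove the claim I would argue by contradiction, producing a vertex inside $J$. If $b_+=0$ then $k_0u=b\in J$ already forces $k_0=0$; so assume $b_+\neq 0$, giving a closed path at $u$, and choose a closed path $c=g_1\cdots g_l$ at $u$ of shortest return (no interior occurrence of $u$). With $N$ chosen so that $Nl>d$, the conjugate $(c^{N})^{*}\,b\,c^{N}\in J$ annihilates every term that is not a power of $c$: such a term is closed at $u$ and shorter than $c^{N}$, hence could only survive as a prefix of $c^{N}$, but $c^{N}$ returns to $u$ only at multiples of $l$, so any closed prefix is some $c^{a}$. Meanwhile each $c^{a}$ and the constant are fixed, so $(c^{N})^{*}\,b\,c^{N}=\sum_{a\ge 0}k_a'c^{a}$ is a genuine polynomial in $c$ with constant term $k_0$, still in $J$. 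Using $c^{*}c=u$ one checks $(u-cc^{*})c^{a}=0$ for $a\ge 1$, whence $(u-cc^{*})\bigl((c^{N})^{*}\,b\,c^{N}\bigr)=k_0(u-cc^{*})\in J$. The last ingredient is that $u\notin P_c(E)$ forces $c$ to have an \emph{exit}: if every $s(g_i)$ emitted only $g_i$, the forced walk out of $u$ would trace an exit-free cycle through $u$, putting $u\in P_c(E)$. Picking an exit $f\neq g_p$ at $y=s(g_p)$ and setting $\eta=g_1\cdots g_{p-1}$, relations (2)--(4) give
\[
f^{*}\,\eta^{*}\,(u-cc^{*})\,\eta\,f \;=\; f^{*}\bigl(y-(g_p\cdots g_l)(g_p\cdots g_l)^{*}\bigr)f \;=\; f^{*}f \;=\; r(f),
\]
since $f^{*}g_p=0$. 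Thus $r(f)\in J$, contradicting $J\cap E^0=\emptyset$, so $k_0=0$ as claimed.

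I expect the main obstacle to be precisely this last step. The whole point of $P_c(E)$ is that a closed path one cannot reduce is exactly one winding around an exit-free cycle; the delicate part is isolating the purely cyclic content of $b$ (via conjugation by $c^{N}$) so that the single identity $u-cc^{*}$ together with the exit edge $f$ can be brought to bear. Everything else, namely the source splitting, the base case $d=0$ (where $r(p_i)=u$ already forces every term to equal $k_0u$, handled by the same claim), and the edge-peeling induction, is routine bookkeeping with the Cuntz--Krieger relations.
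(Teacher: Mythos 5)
Your proof is correct, but it takes a genuinely different route from the paper's. The paper fixes $0\neq x=xu\in J\cap KE$ and inducts on the \emph{number of summands}: it left-multiplies by $\alpha_1^*$, the ghost of the summand of lowest degree, to create a constant term $k_1u$, then conjugates by (ghosts of) the remaining summands so as either to drop the number of summands (and invoke the inductive hypothesis) or to exhibit an element $k_1u+k_2\gamma_1+\dots+k_{r+1}\gamma_1\cdots\gamma_r\in J$ with the $\gamma_i$ closed paths at $u$, and finally shows $\gamma_1$ can have no exit; in particular the paper proves the sharper fact that $J\cap KE\cap L(E)u\neq 0$ forces $u\in P_c(E)$. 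You instead induct on the \emph{maximal path length}, using relation (4) to peel off first edges, which confines the entire difficulty to the constant term of the closed-path piece based at $u$; you then kill that constant term by conjugating with a large power $c^N$ of a shortest closed path (a nice device that isolates the polynomial-in-$c$ content in one stroke), applying the idempotent $u-cc^*$, and conjugating by $\eta f$ where $f$ is an exit of $c$. The engine at the core is the same in both proofs --- an exit lets one conjugate a vertex into $J$, contradicting $J\cap E^0=\emptyset$ --- but your reduction is more systematic and avoids the paper's somewhat delicate bookkeeping (comparing degrees of summands, checking nonvanishing of monomials after conjugation), at the cost of not obtaining the stronger conclusion $u\in P_c(E)$, which indeed you never need: when $u\notin P_c(E)$ you show the constant term vanishes and peeling finishes. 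Two small points to tighten: the cleanest justification that $c$ has an exit when $u\notin P_c(E)$ is to note that a shortest closed path at $u$ is automatically a cycle (if $s(g_i)=s(g_j)$ with $i<j$, then $g_1\cdots g_{i-1}g_j\cdots g_l$ is a shorter return), rather than the ``forced walk'' sketch; and you should record that distinct paths are linearly independent in $L(E)$ (see \cite[Lemma 1.1]{S}, as the paper does), so that the terms of $a$, the degree $d$, and the coefficient $k_0$ are well defined.
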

\begin{proof}
We can assume $J\neq 0$. Apply \cite[Proposition 2.2]{S} to find $0\neq x=xu\in J\cap KE$. Write $x=\sum_{i=1}^r k_i \alpha_i$, with $0\neq k_i\in
K$, $\alpha_i=\alpha_iu\in E^*$ for every $i$ and $\alpha_i\neq \alpha_j$ for every $i\neq j$ and assume that $deg(\alpha_i)\leq deg(\alpha_{i+1})$
for every $i=1,\dots,r-1$. We will prove that $u\in I(P_c(E))$ by induction on the number $r$ of summands.

Note that $r\neq 1$ as otherwise we would have $k_1^{-1}\alpha_1^*x=u\in J$, a contradiction to the hypothesis. So the base case for the induction is
$r=2$. Suppose first that $deg(\alpha_1)=deg(\alpha_2)$. In this case, since $\alpha_1\neq \alpha_2$, we get $\alpha_1^*\alpha_2=0$ so that
$k_1^{-1}\alpha_1^*x=u\in J$, a contradiction again. This gives $deg(\alpha_1)<deg(\alpha_2)$ and then $\alpha_1^*x=k_1u+k_2e_1\dots e_t$ for some
$e_1,\dots,e_t\in E^1$. By multiplying on the left and right hand sides by $u$ we get
$$y_1:=u\alpha_1^*xu=k_1u+k_2ue_1\dots e_tu\in J.$$ Observe that $u$ and $e_1\dots e_n$ have different degrees
and since $k_1u\neq 0$ we obtain that $y_1\neq 0$. Moreover, as $J$ does not contain vertices we have that $c:=ue_1 \dots e_t u\neq 0$ is a closed
path based at $u$. We will prove that $c$ does not have exits: suppose on the contrary that there exist $w\in T(u)$ and $e,f\in E^1$ such that $e\neq
f$, $s(e)=s(f)=w$, $c=aweb=aeb$ for some $a,b\in E^*$. Then $\nu=af$ satisfies $\nu^*c=f^*a^*aeb=f^*eb=0$ so that $\nu^*y_1\nu=k_1r(\nu)\in J$, again
a contradiction. This is saying that $u\in P_c(E)$ so, in particular, $x=xu\in I(P_c(E))$.

Let us assume the result holds for $r$ and prove it for $r+1$. Assume then that $x=xu=\sum_{i=1}^{r+1}k_i \alpha_i$ and distinguish two situations.

First, consider $deg(\alpha_j)=deg(\alpha_{j+1})$ for some $j=1,\dots,r$. The element $\alpha_j^*xu\alpha_j=\alpha_j^*xu\alpha_ju\in J$ is nonzero as
follows: clearly each monomial remains with positive degree as $deg(\alpha_j^*\alpha_i\alpha_j)=deg(\alpha_i)\geq 0$. Moreover, at least
$\alpha_j=\alpha_j^*\alpha_j\alpha_j$ appears in the expression for $\alpha_j^*xu\alpha_j$ because if we had $\alpha_j=\alpha_j^*\alpha_i\alpha_j$
for some $i\neq j$, then $deg(\alpha_i)=deg(\alpha_j)$ which implies $\alpha_j^*\alpha_i=0$ and therefore $\alpha_i=0$, a contradiction. This shows
that $\alpha_j^*xu\alpha_j$ has at least a nonzero monomial, and because distinct elements of $KE$ are linearly independent (see \cite[Lemma
1.1]{S}), then $\alpha_j^*xu\alpha_j\neq 0$. Now, this element has at most $r$ summands because $\alpha_j^*\alpha_{j+1}\alpha_j=0$ and it satisfies
the induction hypothesis, so that $u\in P_c(E)$.

The second case is when $deg(\alpha_i)<deg(\alpha_{i+1})$ for every $i=1,\dots,r$. Then $0\neq \alpha_1^*x=k_1u+\sum_{i=2}^{r+1}k_i\beta_i$ with
$\beta_iu=\beta_i\in E^*$. Multiply again as follows:
$$y_2:=u\beta_{r+1}^*u\alpha_1^*xu\beta_{r+1}u=k_1u+\sum_{i=2}^{r+1}u\beta_{r+1}^*u\beta_iu\beta_{r+1}u\in J.$$
A similar argument to the previous paragraph shows that $y_2$ is nonzero so that, in case some monomial of $y_2$ becomes zero, then $y_2$ is
satisfies the induction hypothesis, therefore $u\in P_c(E)$. If this is not the case, since $\beta_{r+1}$ has maximum degree among the $\beta_i$,
then
$$y_2=k_1u+k_2\gamma_1+k_3\gamma_1\gamma_2+\dots +k_{r+1}\gamma_1\dots\gamma_r,$$ where $\gamma_i$ are closed
paths based at $u$. Let us focus on $\gamma_1$. By proceeding in a similar fashion as before, we can conclude that it cannot have exists as otherwise
there would exist a path $\delta$ with $s(\delta)=u$ and $\delta^*\gamma_1=0$. That would give $\delta^*y_2\delta=k_1r(\delta)\in J$, a
contradiction. Then, $\gamma_1$ is a cycle without exits so that $u\in P_c(E)$, and finally $x=xu\in I(P_c(E))$.
\end{proof}

\begin{proposition}\label{idealsinvertices} Let $E$ be a graph and $J$ an ideal of $L(E)$ such that $J\cap E^0=\emptyset$. Then $J \subseteq I(P_c(E))$.
\end{proposition}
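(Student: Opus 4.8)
The plan is to argue by induction on the degree in ghost edges $\textrm{degge}(\overline{x})$ of the elements $\overline{x}\in J$, using Lemma \ref{casoKE} both as the base case and as the engine of the inductive step. For the base case $\textrm{degge}(\overline{x})=0$, the element $\overline{x}$ admits a representative lying in $KE$; writing $\overline{x}=\sum_{u\in E^0}\overline{x}u$ (a finite sum) and noting each $\overline{x}u\in J\cap KE\cap L(E)u$, Lemma \ref{casoKE} gives $\overline{x}u\in I(P_c(E))$, hence $\overline{x}\in I(P_c(E))$. The whole point of the induction is to peel off the top ghost-degree layer of $\overline{x}$ and land it in $J\cap KE$, where Lemma \ref{casoKE} applies.

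The difficulty in the inductive step is that ghost words of different lengths interfere: a short ghost word can be a prefix of a longer one, so right multiplication by a maximal ghost word does not cleanly isolate the top part. I would resolve this by using relation (CK2) to push every ghost word up to the maximal length $n=\textrm{degge}(\overline{x})$. Concretely, starting from a representative $\sum a_w w^\ast$ of ghost degree $n$ (with $a_w\in KE$), whenever a ghost word $w'$ has length $<n$ and $r(w')$ emits edges, I rewrite $p\,{w'}^\ast=p\,r(w')\,{w'}^\ast=\sum_{s(g)=r(w')}(pg)(w'g)^\ast$, raising its length by one. Since lengths are bounded by $n$, iterating terminates and produces a representative $\overline{x}=x^{(n)}+x^{(s)}$ in which every ghost word of $x^{(n)}$ has length exactly $n$, while every ghost word of $x^{(s)}$ has length $<n$ and ends at a sink.

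Now for each length-$n$ ghost word $w$ occurring in $x^{(n)}$ I would compute $\overline{x}\,w$. The contribution of $x^{(n)}$ collapses to the coefficient $a_w$, because distinct paths of equal length are orthogonal ($w_1^\ast w_2=\delta_{w_1,w_2}r(w_1)$); the contribution of $x^{(s)}$ vanishes, because a ghost word of $x^{(s)}$ ends at a sink and so cannot be a proper prefix of the strictly longer word $w$, forcing the corresponding product to be $0$. Hence $\overline{x}\,w=a_w\in J\cap KE\cap L(E)r(w)\subseteq I(P_c(E))$ by Lemma \ref{casoKE}. It follows that $x^{(n)}=\sum_w(\overline{x}\,w)w^\ast=\overline{x}\sum_w ww^\ast\in J\cap I(P_c(E))$, and therefore $x^{(s)}=\overline{x}-x^{(n)}\in J$ is an element of ghost degree $<n$, so $x^{(s)}\in I(P_c(E))$ by the induction hypothesis. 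Combining, $\overline{x}=x^{(n)}+x^{(s)}\in I(P_c(E))$, completing the induction.

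The main obstacle is entirely the uniformization of ghost-word lengths in the inductive step; once Lemma \ref{casoKE} is available, everything else is bookkeeping with the Cuntz--Krieger relations. The crucial observation making the (CK2)-extension succeed is that the process stalls precisely at sinks, and a sink-terminated ghost word is automatically annihilated by right multiplication with any strictly longer ghost word. Thus such words never contaminate the top-degree computation and are instead disposed of, at lower ghost degree, by the induction hypothesis.
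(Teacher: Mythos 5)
Your proof is correct, and it rests on the same skeleton as the paper's: induction on the degree in ghost edges, with Lemma \ref{casoKE} as the base case and the device that ultimately captures everything. The inductive step, however, is organized by a genuinely different decomposition. The paper never uniformizes ghost-word lengths: after reducing to $x=xu$, it splits $x=\sum_{i=1}^{r}\beta_i e_i^{\ast}+\beta$ according to the \emph{last} ghost edge $e_i\in s^{-1}(u)$ of each monomial (with $\beta\in KE$), right-multiplies by the single edges of $s^{-1}(u)$, and feeds the resulting elements $xe_i=\beta_i+\beta e_i\in J$ --- which have ghost degree lowered by one but in general are \emph{not} in $KE$ --- to the induction hypothesis; Lemma \ref{casoKE} enters the step only for the edges $f_j\in s^{-1}(u)$ that do not occur in the ghost part, for which $xf_j=\beta f_j\in J\cap KE$. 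Reassembly is then a single application of (CK2) at the one vertex $u$. You, by contrast, first normalize the representative by repeated (CK2) rewritings so that every ghost word either has the maximal length $n$ or terminates at a sink, and then strip the whole top layer at once: right multiplication by the full length-$n$ paths $w$ lands $\overline{x}w=a_w$ directly in $J\cap KE\cap L(E)r(w)$, where Lemma \ref{casoKE} applies, and the induction hypothesis is needed only for the sink remainder $x^{(s)}$. What each buys: the paper's one-edge peeling avoids your normalization step entirely and keeps the bookkeeping local to the terminal vertex, giving a shorter argument in which the induction hypothesis does the heavy lifting; your full-depth peeling invokes Lemma \ref{casoKE} more directly and makes the genuine obstruction explicit --- sinks, where (CK2) is unavailable, neutralized by your observation that a sink-terminated ghost word is orthogonal to any strictly longer path. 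That observation is the exact counterpart of the paper's case split between the edges $e_i$ that occur in $x$ and the unused edges $f_j$; both treatments of this obstruction are sound, so the two proofs are equally valid.
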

\begin{proof}
Let $0\neq x\in J$, and write $x=\sum xu_i$ for some $u_i\in E^0$ with $0\neq xu_i$. As $J$ is an ideal, $0\neq xu_i\in J$, so that we can assume
without loss of generality that $0\neq x=xu$.

We will show, by induction on the degree in ghost edges, that if $xu\in J$, with $u\in E^0$, then  $xu\in I(P_c(E))$. If  $\textrm{degge}(xu)$, the
result follows by Lemma \ref{casoKE}. Suppose the result true for degree in ghost edges strictly less than $\textrm{degge}(xu)$ and show it for
$\textrm{degge}(xu)$.

Write $x=\sum_{i=1}^r\beta_ie_i^\ast+\beta$, with $\beta_i\in L(E)$, $\beta=\beta u\in KE$ and $e_i\in  E^1$, being $e_i\neq e_j$ for every $i\neq
j$. Then  $xue_i=\beta_i+\beta e_i\in J$; since $\textrm{degge}(xue_i)< \textrm{degge}(xu)$, by the  induction hypothesis $\beta_i+\beta e_i\in
I(P_c(E))$, for every $i\in \{1, \dots, r\}$.

If $u=\sum_{i=1}^re_ie_i^\ast$, then $xu=\sum_{i=1}^r\beta_i e_i^\ast +\sum_{i=1}^r\beta e_ie_i^\ast=$ $\sum_{i=1}^r(\beta_i+\beta e_i)e_i^\ast\in
I(P_c(E))$, and we have finished.

If $u=\sum_{i=1}^re_ie_i^\ast+\sum_{j=1}^sf_jf_j^\ast$ (where $f_j\in E^1$), then $xuf_j=\beta f_j\in J\cap  KE$. By Lemma \ref{casoKE} $\beta f_j\in
I(P_c(E))$, for every $j\in \{1, \dots, s\}$, hence  $xu=\sum_{i=1}^r(\beta_i+\beta e_i)e_i^\ast+ \sum_{j=1}^s\beta f_jf_j^\ast \in I(P_c(E))$.
\end{proof}

For a graph $E$, let $\{c_j\}_{j\in \Lambda}$ be the set of all different cycles without exits. By abusing of notation, identify two cycles that have
the same vertices. Then we can obtain the following

\begin{corollary}\label{idealprimo}
Let $J$ be a prime ideal of a Leavitt path algebra $L(E)$ which does not contain vertices. Then
$$\bigoplus_{i\in \Lambda^\prime}
{\mathbb M}_{n_i}(K[x, x^{-1}])\subseteq J \subseteq \bigoplus_{i\in \Lambda} {\mathbb M}_{n_i}(K[x, x^{-1}]),$$ where $\Lambda^\prime$ has exactly
one element less than $\Lambda$, $|\Lambda|\leq \aleph_0$ and $n_i\in {\mathbb N} \cup \{\infty\}$.
\end{corollary}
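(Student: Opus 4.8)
The plan is to read the right-hand side $\bigoplus_{i\in \Lambda} {\mathbb M}_{n_i}(K[x,x^{-1}])$ as an internal direct sum of two-sided ideals of $L(E)$ and then play the upper inclusion off against the lower one through primeness. For each cycle without exits $c_i$, set $A_i:=I(c_i^0)$, the ideal of $L(E)$ generated by the vertices of $c_i$. By the structure theory of ideals generated by cycles without exits (as in \cite{AAPS, AMP}), each $A_i$ is a two-sided ideal of $L(E)$ with $A_i\cong {\mathbb M}_{n_i}(K[x,x^{-1}])$, where $n_i$ counts the paths ending at a fixed vertex of $c_i$; moreover, since distinct cycles without exits have disjoint vertex sets, $P_c(E)=\bigsqcup_{i\in\Lambda}c_i^0$ and $I(P_c(E))=\bigoplus_{i\in\Lambda}A_i$ is an internal direct sum. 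This identifies the right-hand side with $I(P_c(E))$, and the upper inclusion $J\subseteq I(P_c(E))=\bigoplus_{i\in\Lambda}A_i$ is then exactly Proposition \ref{idealsinvertices} applied to the hypothesis $J\cap E^0=\emptyset$.

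For the lower inclusion I would exploit primeness. Because the sum $\bigoplus_{i\in\Lambda}A_i$ is direct, $A_i\cap A_j=0$ for $i\neq j$; as $A_i$ is a right ideal and $A_j$ a left ideal, $A_iA_j\subseteq A_i\cap A_j=0\subseteq J$. Since $J$ is prime, it follows that $A_i\subseteq J$ or $A_j\subseteq J$, and hence there is at most one index $i_0\in\Lambda$ with $A_{i_0}\not\subseteq J$. On the other hand, not every $A_i$ can lie in $J$: if they all did, then $J\supseteq\bigoplus_{i\in\Lambda}A_i=I(P_c(E))$, which combined with the upper inclusion would force $J=I(P_c(E))$; but $I(P_c(E))$ contains the vertices $P_c(E)$, contradicting $J\cap E^0=\emptyset$. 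Therefore exactly one index $i_0$ satisfies $A_{i_0}\not\subseteq J$. Putting $\Lambda':=\Lambda\setminus\{i_0\}$, which has exactly one element less than $\Lambda$, we get $A_i\subseteq J$ for every $i\in\Lambda'$ and thus $\bigoplus_{i\in\Lambda'}A_i\subseteq J$, as required.

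It remains to record the bounds on the indices: since $E$ is a countable row-finite graph there are at most countably many cycles, so $|\Lambda|\leq\aleph_0$, and for each $i$ the set of paths into $c_i$ is countable, whence $n_i\in{\mathbb N}\cup\{\infty\}$. The one genuinely load-bearing step is the structural claim quoted at the outset, namely that $I(P_c(E))$ decomposes as an internal direct sum of the blocks $A_i$, each a two-sided ideal of $L(E)$ isomorphic to ${\mathbb M}_{n_i}(K[x,x^{-1}])$, with distinct blocks intersecting trivially. Once this is secured the rest is formal: the whole force of the argument is the single line $A_iA_j\subseteq A_i\cap A_j=0$, which converts primeness into the statement that $J$ omits at most one block, while the vertex-freeness of $J$ guarantees it omits at least one.
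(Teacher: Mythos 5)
Your proof is correct and takes essentially the same route as the paper's: the upper inclusion is Proposition \ref{idealsinvertices}, the identification of the right-hand side with $I(P_c(E))$ and its block decomposition is the cited structure result of \cite{AAPS}, and primeness of $J$ forces all but one block into $J$. The only (minor) differences are that you apply primeness directly at the ideal level via $A_iA_j\subseteq A_i\cap A_j=0$, whereas the paper argues elementwise using \cite[Proposition 3.6 (i)]{AAPS} together with the fact that $I(P_c(E))/J$ is a prime ring, and that you explicitly verify at least one block escapes $J$ (so $\Lambda'$ has \emph{exactly} one element less than $\Lambda$), a point the paper's proof leaves implicit.
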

\begin{proof}
We will show $$I(\{c_j^0\}_{j\in\Lambda^\prime}) \subseteq J \subseteq I(P_c(E)).$$ Then, apply \cite[Proposition 3.6 (iii)]{AAPS}.

Suppose that there exist $z_1\in I(c_1)$ and $z_2\in I(c_2)$, for $c_1$ and $c_2$ different cycles without exits in $L(E)$ and such that $z_1,
z_2\notin J$. By \cite[Proposition 3.6 (i)]{AAPS} $\overline{z_1} \overline{I(P_c(E))}\overline{z_2}=\overline{z_1 I(P_c(E)) z_2}=0$. Since $J$ is a
prime ideal and $J\subseteq I(P_c(E))$, by Proposition \ref{idealsinvertices}, $\overline{I(P_c(E))}:=I(P_c(E))/J$ is a prime ring. This means
$\overline{z_1}=0$ or $\overline{z_2}=0$, that is $z_1\in J$ or $z_2\in J$, a contradiction. This shows our claim.
\end{proof}

\begin{corollary}\label{dentrodelideal} Let $F$ be a graph such that there is a unique cycle $\mu$ without
exits (but there might be other cycles with exits).
\begin{enumerate}[{\rm (i)}]
\item If $F\in {\mathcal M}(F)$ and $H\in {\mathcal H}_F\setminus \{\emptyset\}$, then $\overline{\mu^0}\subseteq H$.
\item If $J$ is an ideal of $L(F)$ such that $J\cap F^0=\emptyset$, then $J\subseteq I(\overline{\mu^0})$.
\end{enumerate}
\end{corollary}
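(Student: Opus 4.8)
The plan is to treat the two parts separately, exploiting the geometry of a cycle without exits for (i) and reducing (ii) to Proposition \ref{idealsinvertices}.

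For (i) I would first unpack the hypotheses. Since $F\in\mathcal{M}(F)$, the whole vertex set $F^0$ is a maximal tail; Conditions (MT1) and (MT2) hold trivially for $F^0$, so the only substantive content is Condition (MT3): any two vertices admit a common vertex reachable from both. The second key point is that, because $\mu$ is a cycle \emph{without exits}, every vertex of $\mu^0$ emits exactly one edge, the one lying on $\mu$; hence any path issuing from a vertex of $\mu^0$ must run along $\mu$, so that $a\in\mu^0$ and $a\geq b$ force $b\in\mu^0$.

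Now take $0\neq H\in\mathcal{H}_F$ and fix $h\in H$. Choosing any $v\in\mu^0$, Condition (MT3) supplies $y\in F^0$ with $v\geq y$ and $h\geq y$. Since $H$ is hereditary and $h\in H$, we get $y\in H$; and since $v\in\mu^0$ with $v\geq y$, the no-exit observation forces $y\in\mu^0$. Thus $y$ is a vertex of $\mu$ lying in $H$. Running around the cycle from $y$ shows $\mu^0\subseteq T(y)$, and hereditariness of $H$ gives $T(y)\subseteq H$, whence $\mu^0\subseteq H$. Finally, as $H$ is hereditary \emph{and} saturated and contains $\mu^0$, it contains the smallest such set, i.e.\ $\overline{\mu^0}\subseteq H$, as claimed.

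For (ii) the argument is almost immediate. Since $\mu$ is the \emph{unique} cycle without exits of $F$, the set $P_c(F)$ of vertices lying on cycles without exits is exactly $\mu^0$. If $J$ is an ideal with $J\cap F^0=\emptyset$, then Proposition \ref{idealsinvertices} yields $J\subseteq I(P_c(F))=I(\mu^0)$. Because $\mu^0$ is a set of vertices, $I(\mu^0)$ is generated by idempotents and is therefore graded, so by the recalled fact that a graded ideal equals $I(I(\mu^0)\cap F^0)$ with $I(\mu^0)\cap F^0=\overline{\mu^0}$, we have $I(\mu^0)=I(\overline{\mu^0})$. Hence $J\subseteq I(\overline{\mu^0})$. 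The main obstacle is the careful use of the no-exit hypothesis in (i): one must verify both that reachability from a cycle vertex never leaves the cycle and that forcing a single cycle vertex into $H$ drags in all of $\mu^0$; once this is isolated, Condition (MT3) and the saturation of $H$ enter only routinely, and (ii) follows directly from Proposition \ref{idealsinvertices} together with the identification $P_c(F)=\mu^0$.
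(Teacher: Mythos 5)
Your proof is correct, and part (i) takes a genuinely different route from the paper while part (ii) coincides with it. For (ii) the paper simply remarks that the statement is a particular case of Proposition \ref{idealsinvertices}; your identification $P_c(F)=\mu^0$ makes that explicit, and note that you only need the trivial inclusion $I(\mu^0)\subseteq I(\overline{\mu^0})$ (immediate from $\mu^0\subseteq\overline{\mu^0}$), so the detour through gradedness and the equality $I(\mu^0)\cap F^0=\overline{\mu^0}$ is harmless but unnecessary. For (i) the difference lies in how one gets a vertex of the cycle into $H$: the paper invokes \cite[Lemma 2.1]{HS} applied to the maximal tail $F^0$, which yields $\Omega(\mu^0)=\Omega(F^0)=\emptyset$, i.e.\ every vertex of $F$ connects to $\mu^0$, so any $h\in H$ satisfies $h\geq v$ for all $v\in\mu^0$ and hereditariness gives $\mu^0\subseteq H$ at once. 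You instead derive the needed connectivity internally: Condition (MT3) produces a common descendant $y$ of $h\in H$ and of a cycle vertex $v$, and your observation that a cycle without exits emits no edges off itself forces $y\in\mu^0$, hence $y\in H\cap\mu^0$; running around the cycle and using hereditariness then gives $\mu^0\subseteq H$. What your approach buys is self-containedness --- no appeal to the Hong--Szyma\'nski lemma, only the definition of (MT3) and the geometry of a cycle without exits --- at the cost of a slightly longer argument; both proofs conclude identically by saturation, since $\overline{\mu^0}$ is the smallest hereditary saturated set containing $\mu^0$.
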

\begin{proof}
(i). Applying \cite[Lemma 2.1]{HS}, we know that $\Omega(F^0)=\Omega(\mu^0)$, but since $\Omega(F)=\emptyset$, then this means that for every $w\in
F^0\setminus \mu^0$ we have $w\geq_F v$ for some $v\in \mu^0$. Now, given $h\in H$, as $\mu$ is a cycle we in fact have that $h\geq v$ for every
$v\in \mu^0$, and as $H$ is hereditary, this means that $\mu^0\subseteq H$. Now, because $H$ is also saturated we get $\overline{\mu^0}\subseteq H$.

\medskip

(ii) It is a particular case of Proposition \ref{idealsinvertices}.
\end{proof}

We recall here some definitions which were introduced in \cite{AAPS}. We say that an infinite path $\gamma=(e_n)_{n=1}^{\infty}$ \emph{ends in a
cycle} if there exists $m\geq 1$ and a cycle $c$ such  that the infinite subpath $(e_n)_{n=m}^{\infty}$ is just the infinite path $ccc\dots$. We say
that a graph $E$ is a \emph{comet} if it has exactly one cycle $c$, $T(v)\cap c^0\neq \emptyset$ for every vertex $v\in E^0$, and every infinite path
ends in the cycle $c$.

\medskip

Next propositions will be the pieces from which the main theorem of this section (Theorem \ref{correspondenciatotal}) will rely on.

\begin{proposition}\label{correspondencianograduadounlado} Let $E$ be a graph. There is a map $$\Theta:\Spec_\tau(L(E))\to {\mathcal M}_\tau(E)\times
\Spec(K[x,x^{-1}])^*.$$
\end{proposition}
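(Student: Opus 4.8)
The plan is to construct $\Theta$ explicitly. Given a non-graded prime ideal $I\in\Spec_\tau(L(E))$, set $H=I\cap E^0$ and $M=E^0\setminus H$. By Lemma \ref{nongradedgivesmaximaltailtau}, $I(H)$ is graded prime and $M\in\mathcal{M}_\tau(E)$; this delivers the first coordinate of the target. The second coordinate should arise from the structure of $I/I(H)$. The key point is that since $M\in\mathcal{M}_\tau(E)$, the quotient graph $E/H$ fails Condition (L): it contains a cycle $c$ without exits, and moreover since $I$ is \emph{prime}, Corollary \ref{idealprimo} (applied to the prime ideal $I/I(H)$ of $L(E/H)$, which contains no vertices because $H=I\cap E^0$) forces this cycle without exits to be essentially unique modulo $J$. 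So there should be a distinguished cycle $c$ without exits in $E/H$ governing $I/I(H)$.

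Next I would isolate the commutative structure carrying the prime data. The reduction via Corollary \ref{idealprimo} places $I/I(H)$ between $\bigoplus_{\Lambda'}\mathbb{M}_{n_i}(K[x,x^{-1}])$ and $\bigoplus_\Lambda\mathbb{M}_{n_i}(K[x,x^{-1}])$, with $\Lambda'$ one element short of $\Lambda$; primeness pins down exactly \emph{one} cycle without exits whose associated ideal is \emph{not} contained in $I/I(H)$. The corner $v L(E/H) v$ at a vertex $v$ on that distinguished cycle $c$ of length $n$ is isomorphic to $\mathbb{M}_{n}(K[x,x^{-1}])$ (this is the comet-tail computation from \cite{AAPS}), and a Morita/corner argument identifies $I/I(H)$ intersected with that component with a prime ideal of $K[x,x^{-1}]$. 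That prime ideal must be nonzero: if it were zero, then the distinguished cycle's ideal would be contained in $I/I(H)$ after all, contradicting the choice made above, and in fact it would force $I=I(H)$ by Lemma \ref{Mgamma}-type reasoning, contradicting that $I$ is non-graded. This produces the second coordinate in $\Spec(K[x,x^{-1}])^*$, and we define $\Theta(I)=(M,\mathfrak{p})$.

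The main obstacle I anticipate is the bookkeeping needed to single out the distinguished cycle canonically and to verify that the corner isomorphism $vL(E/H)v\cong\mathbb{M}_{n}(K[x,x^{-1}])$ transports $I/I(H)$ to a genuine prime ideal of the Laurent ring rather than merely to an ideal. Concretely, one must check that contracting $I/I(H)$ along the corner is compatible with primeness and that the choice of base vertex $v$ on $c$ does not affect the resulting prime $\mathfrak{p}$ (different vertices give isomorphic corners, and the isomorphisms are compatible up to the conjugation that fixes $\Spec$). Since this proposition only asserts the existence of the \emph{map} $\Theta$ (bijectivity and naturality being deferred to Theorem \ref{correspondenciatotal}), I would not here prove injectivity or surjectivity; I would only confirm that the two coordinates are well defined and independent of the incidental choices, so that $\Theta(I)$ is a single well-defined element of $\mathcal{M}_\tau(E)\times\Spec(K[x,x^{-1}])^*$. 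The delicate step is thus the well-definedness of the second coordinate, relying essentially on Corollary \ref{idealprimo} to guarantee exactly one cycle contributes.
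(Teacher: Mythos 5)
Your construction of the first coordinate is exactly the paper's: set $H=I\cap E^0$, invoke Lemma \ref{nongradedgivesmaximaltailtau}, and pass to ${\mathcal J}=I/I(H)$, a nonzero, non-graded prime ideal of $L(E/H)$ meeting no vertices. For the second coordinate your route genuinely diverges: you use Corollary \ref{idealprimo} plus primeness to single out the unique cycle without exits whose associated ideal is not absorbed into ${\mathcal J}$, whereas the paper first proves $F^0=(E/H)^0\in {\mathcal M}_\tau(E/H)$ and quotes \cite[Lemma 2.1]{HS} to conclude that $E/H$ has exactly \emph{one} cycle $\mu$ without exits, then traps ${\mathcal J}\subseteq I(\overline{\mu^0})$ via Corollary \ref{dentrodelideal}(ii), identifies $I(\overline{\mu^0})\cong L(_{\overline{\mu^0}}F)\cong {\mathbb M}_n(K[x,x^{-1}])$ through the comet realization, proves ${\mathcal J}$ is prime \emph{as an ideal of} $I(\overline{\mu^0})$ using local units and \cite[Lemma 3.21]{T}, and reads off ${\mathcal J}={\mathbb M}_n(P)$ with $P$ a nonzero prime. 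Your way of pinning down the distinguished cycle algebraically (rather than graph-theoretically) is legitimate and even lets you bypass the transfer argument showing $F^0\in{\mathcal M}_\tau(F)$.

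However, two of your steps are wrong as stated. First, the corner $vL(E/H)v$ at a vertex $v$ of a cycle $c$ without exits is isomorphic to $K[x,x^{-1}]$, \emph{not} to ${\mathbb M}_n(K[x,x^{-1}])$ --- this is precisely the computation used in Lemma \ref{Mtaunotprimitive}. The matrix algebra is not a corner but the ideal $I(\overline{c^0})\cong L(_{\overline{c^0}}F)$, and its size reflects the whole comet $_{\overline{c^0}}F$, not the length of $c$. Either object can serve to extract the prime, but you must choose one and then justify, in this non-unital setting, that ${\mathcal J}$ contracts to a \emph{prime} ideal of it; your phrase ``Morita/corner argument'' skips exactly the point where the paper does real work with \cite[Lemma 3.21]{T} and local units. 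Second, your non-vanishing argument runs backwards: if the extracted ideal $\mathfrak{p}$ were zero, that would mean ${\mathcal J}$ meets the distinguished component \emph{trivially}, not that the component's ideal is contained in ${\mathcal J}$. The correct contradiction is that ${\mathcal J}$ would then coincide with the direct sum of the remaining components of $I(P_c(E/H))$, an ideal generated by vertices (or zero), hence graded; this forces $I$ to be graded, contrary to hypothesis. With those two repairs your construction does yield a well-defined $\Theta(I)=(M,\mathfrak{p})$, and the deferral of bijectivity to Theorem \ref{correspondenciatotal} matches the paper's organization.
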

\begin{proof}
Let $J$ be a prime ideal of $L(E)$ which is not graded. As the zero
ideal $\{0\}$ is graded, then $J\neq 0$. Consider $H=E^0\cap J\in
{\mathcal H}_E$ by \cite[Lemma 3.9]{AA1}. Then write $F=E/H$ so that
\cite[Lemma 2.1]{APS} gives that $L(F)\cong L(E)/I(H)$. Note that in
the case $H=\emptyset$ we simply have $F=E$ and we do not invoke any
result. Thus, Lemma \ref{nongradedgivesmaximaltailtau} gives that
$I(H)$ is graded prime and that $M=E^0\setminus H=F^0\in {\mathcal
M}_\tau(E)$. In particular this is saying that $L(F)$ is a prime
ring as $L(F)\cong L(E)/I(H)$.

Moreover, the ideal ${\mathcal J}=J/I(H)$ is prime in $L(F)$. To see this, first note that $I(H)\subseteq J$ but $I(H)\neq J$, as $J$ is nongraded by
hypothesis. Hence ${\mathcal J}\neq 0$. Furthermore, $$L(F)/{\mathcal J}\cong \frac{L(E)/I(H)}{J/I(H)}\cong L(E)/J$$ is a prime ring as $J$ is a
prime ideal in $L(E)$, so that ${\mathcal J}$ is a prime ideal in $L(F)$. Obviously it is not graded because otherwise it would imply that the ideal
$J$ to which it lifts is graded too.

Now, since $F^0\in {\mathcal M}_\tau(E)$, we will prove that $F^0\in {\mathcal M}_\tau(F)$. Clearly $F^0\setminus F^0=\emptyset$ is hereditary and
saturated in $F$, so that by Lemma \ref{maximaltailhersat} $F^0$ satisfies Conditions (MT1) and (MT2). Let us check Condition (MT3): take $v,w\in
F^0$. Since $F^0$ is a maximal tail in $E$, there exists $y\in F^0$ such that $v,w\geq_E y$, which means that there exist $p,q\in E^*$ such that
$s(p)=v, s(q)=w$ and $r(p)=r(q)=y$. Then, since $y\not\in H$, by hereditariness we have that $(p^0\cup q^0)\cap H=\emptyset$, and thus
$p^0,q^0\subseteq F^0$, which implies, by the way that $F$ is defined, that $v,w\geq_F y$. Finally, we can find a cycle $c$ in $F$ without exits in
$F$ when seen inside $E$, but this same cycle will not have exits in $F$ when regarded in $F$. This proves that $F^0\in {\mathcal M}_\tau(F)$.

Applying \cite[Lemma 2.1]{HS} to $F$ we get that there exists a unique cycle $\mu$ in $F$ without exits (but there could be other cycles with
exists). In this case we also have that $\emptyset=\Omega(F)=\Omega(\mu^0)$, or in other words, every vertex in $F^0$ connects to the cycle $\mu$.

Note that since $J\cap E^0=H$, then ${\mathcal J}\cap (E/H)^0={\mathcal J}\cap F^0=\emptyset$, so that we are in position to apply Corollary
\ref{dentrodelideal}(ii) to get that ${\mathcal J}\subseteq I(\overline{\mu^0})$. Now, by \cite[Lemma 2.1]{AP} we obtain $I(\overline{\mu^0})\cong
L(_{\overline{\mu^0}}F)$ as nonunital rings. In the notation of \cite{AAPS}, we have $P_\mu(F)=\mu^0$ so that $\overline{\mu^0}=\overline{P_\mu(F)}$.
First, we can show that every infinite path in $F$ ends in the cycle $\mu$ by just readapting the ideas in \cite[Proposition 3.6 (iii)]{AAPS}.
Moreover, this fact also implies that $\mu$ is the only cycle in $_{\overline{\mu^0}}F$, because any other cycle would produce an infinite path which
would not end in $\mu$. Clearly, by the way $F$ and $_{\overline{\mu^0}}F$ were constructed, every vertex in the latter connects to $\mu$.

This proves that $_{\overline{\mu^0}}F$ is in fact a comet, so that invoking \cite[Proposition 3.5]{AAPS} one gets that $L(_{\overline{\mu^0}}F)\cong
{\mathbb M}_n(K[x, x^{-1}])$, where $n\in \mathbb{N}$ if $_{\overline{\mu^0}}F$ is finite, or $n=\infty$ otherwise. By the composition of the two
previously determined isomorphism, we have a univocally defined $K$-algebra isomorphism $$\phi_\mu:I(\overline{\mu^0})\to {\mathbb M}_n(K[x,
x^{-1}]).$$

We will show now that ${\mathcal J}$ is a prime ideal in $I(\overline{\mu^0})$. Consider $A,B$ ideals of $I(\overline{\mu^0})$ such that $J\subseteq
A,B$ and $AB\subseteq J$. Since $I(\overline{\mu^0})$ is (isomorphic to) the Leavitt path algebra of $_{\overline{\mu^0}}F$, it has a set of local
units so that an application of \cite[Lemma 3.21]{T} yields that $A,B$ are ideals of $L(F)$ as well, but ${\mathcal J}$ was prime in $L(F)$ so that
$A\subseteq J$ or $B\subseteq J$, as we needed.

Then, $\phi_\mu({\mathcal J})$ is a prime ideal in ${\mathbb M}_n(K[x,x^{-1}])$, and it is well known that in this case there exists a unique ideal
$P$ of $K[x,x^{-1}]$ such that $\phi_\mu({\mathcal J})={\mathbb M}_n(P)$. Moreover, this ideal $P$ is prime in $K[x,x^{-1}]$ (see for instance
\cite{L}). Moreover, note that $P\neq 0$ because ${\mathcal J}\neq 0$.

That way we have associated a maximal tail $M\in {\mathcal M}_\tau(E)$ and a prime ideal $P$ in $K[x,x^{-1}]$ to $J$. In other words we have defined
$\Theta(J)=(M,P)$.
\end{proof}

\begin{proposition}\label{correspondencianograduadoelotrolado} Let $E$ be a graph. There is a map $$\Lambda:{\mathcal M}_\tau(E)\times
\Spec(K[x,x^{-1}])^* \to \Spec_\tau(L(E)).$$
\end{proposition}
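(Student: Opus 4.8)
The plan is to construct the map $\Lambda$ by essentially reversing the procedure of Proposition \ref{correspondencianograduadounlado}. Given a pair $(M,P)\in {\mathcal M}_\tau(E)\times \Spec(K[x,x^{-1}])^*$, I would first set $H=E^0\setminus M$. Since $M\in {\mathcal M}_\tau(E)\subseteq {\mathcal M}(E)$, Lemma \ref{maximaltailhersat} gives $H\in {\mathcal H}_E$, and Proposition \ref{gradedprime} tells us that $I(H)$ is a graded prime ideal of $L(E)$, with $F:=E/H$ satisfying $L(F)\cong L(E)/I(H)$ and $F^0=M$ a maximal tail. As in the previous proof, one checks that $F^0\in {\mathcal M}_\tau(F)$, so by \cite[Lemma 2.1]{HS} there is a unique cycle $\mu$ in $F$ without exits and every vertex of $F$ connects to $\mu$. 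This lets me invoke exactly the chain of isomorphisms already assembled in Proposition \ref{correspondencianograduadounlado}, namely $\phi_\mu:I(\overline{\mu^0})\xrightarrow{\sim} {\mathbb M}_n(K[x,x^{-1}])$ for a suitable $n\in {\mathbb N}\cup\{\infty\}$.

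Next I would use $P$ to manufacture the prime ideal. Set ${\mathcal J}:=\phi_\mu^{-1}({\mathbb M}_n(P))$, an ideal of $I(\overline{\mu^0})$. Since $P$ is a nonzero prime ideal of $K[x,x^{-1}]$, the ideal ${\mathbb M}_n(P)$ is a nonzero prime ideal of ${\mathbb M}_n(K[x,x^{-1}])$, and transporting this back through $\phi_\mu$ shows ${\mathcal J}$ is a nonzero prime ideal of $I(\overline{\mu^0})$. I then need to promote ${\mathcal J}$ to an ideal of $L(F)$: because $I(\overline{\mu^0})$ has local units (being a Leavitt path algebra), \cite[Lemma 3.21]{T} guarantees ${\mathcal J}$ is automatically an ideal of $L(F)$, and an argument dual to the one in the forward direction shows it is prime and non-graded there. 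Finally I pull ${\mathcal J}$ back to $L(E)$ along the quotient $L(E)\to L(E)/I(H)\cong L(F)$, defining $J$ to be the preimage of ${\mathcal J}$; since $I(H)\subseteq J$ and $J/I(H)\cong {\mathcal J}$ is prime and non-graded, $J$ is a non-graded prime ideal of $L(E)$, and I set $\Lambda(M,P)=J$.

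The two steps I expect to require the most care are, first, verifying that ${\mathcal J}$ is genuinely \emph{prime} as an ideal of $L(F)$ (not merely of $I(\overline{\mu^0})$) and that it is \emph{non-graded}, and second, confirming that the resulting $J\subseteq L(E)$ lands in $\Spec_\tau$ rather than $\Spec_\gamma$. For primality in $L(F)$, I would take ideals $A,B$ of $L(F)$ with $AB\subseteq J$; intersecting with $I(\overline{\mu^0})$ and using that this corner absorbs the relevant products should reduce the question to primality of ${\mathcal J}$ inside $I(\overline{\mu^0})$, which holds by construction. The non-gradedness is the genuinely delicate point: a graded ideal is generated by idempotents, hence by vertices, so it suffices to show ${\mathcal J}$ contains no vertex of $F$. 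This follows because $P\neq K[x,x^{-1}]$ forces ${\mathbb M}_n(P)$ to omit the identity-type matrix units corresponding to vertices, so $\phi_\mu^{-1}({\mathbb M}_n(P))$ contains no $v\in \mu^0$, and by Corollary \ref{dentrodelideal}(i) any nonzero graded ideal of $L(F)$ would have to contain $\overline{\mu^0}$ and in particular a vertex.

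The main obstacle, then, is controlling the interplay between the three rings $I(\overline{\mu^0})\subseteq L(F)\cong L(E)/I(H)$ and ensuring that primality and non-gradedness are each preserved under both the corner-ideal passage (via \cite[Lemma 3.21]{T}) and the quotient lift. This is precisely the reverse of the bookkeeping already carried out in Proposition \ref{correspondencianograduadounlado}, so I would organize the proof to mirror that argument step for step, which also makes transparent that $\Theta$ and $\Lambda$ are mutually inverse — the fact one ultimately wants for the bijection in Theorem \ref{correspondenciatotal}.
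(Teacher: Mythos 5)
Your construction coincides with the paper's own proof: set $H=E^0\setminus M$, $F=E/H$, locate the unique exit-less cycle $\mu$ via \cite[Lemma 2.1]{HS}, define ${\mathcal J}=\phi_\mu^{-1}({\mathbb M}_n(P))$, promote it to an ideal of $L(F)$ by \cite[Lemma 3.21]{T} using local units, and pull back along $L(E)\to L(E)/I(H)\cong L(F)$. Your non-gradedness argument is a harmless variant: the paper argues that if ${\mathcal J}$ were graded then $H_{\mathcal J}={\mathcal J}\cap F^0$ would satisfy $\overline{\mu^0}\subseteq H_{\mathcal J}\subseteq \overline{\mu^0}$, forcing ${\mathcal J}=I(\overline{\mu^0})$ and hence $P=K[x,x^{-1}]$; your route via ``no vertex lies in ${\mathcal J}$'' also works, but the claim that ${\mathbb M}_n(P)$ omits the images of vertices deserves justification (e.g.\ ${\mathbb M}_n(P)$ contains no nonzero idempotent at all, since $\bigcap_k P^k=0$ in the principal ideal domain $K[x,x^{-1}]$).

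The step that does not stand as written is the primality transfer from the corner to $L(F)$. The paper first observes that every nonzero prime of $K[x,x^{-1}]$ is maximal (it is a Euclidean domain), so ${\mathcal J}$ is a \emph{maximal} ideal of $I(\overline{\mu^0})$, and its case analysis runs on that maximality. In your version, intersecting ideals $A,B$ of $L(F)$ with the corner and invoking primality of ${\mathcal J}$ inside $I(\overline{\mu^0})$ only yields, say, $A\cap I(\overline{\mu^0})\subseteq{\mathcal J}$, and there is no formal passage from this to $A\subseteq{\mathcal J}$: an ideal of $L(F)$ need not lie inside the corner, and ``absorption of products'' by the corner does not produce that containment. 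The gap is closed by exactly the dichotomy the paper uses, namely Corollary \ref{dentrodelideal} applied to $H_A=A\cap F^0$: if $H_A\neq\emptyset$, part (i) gives $\overline{\mu^0}\subseteq H_A$, hence $I(\overline{\mu^0})\subseteq I(H_A)\subseteq A$, so $I(\overline{\mu^0})=A\cap I(\overline{\mu^0})\subseteq{\mathcal J}$, which via $\phi_\mu$ forces $P=K[x,x^{-1}]$, a contradiction; therefore $H_A=\emptyset$, part (ii) gives $A\subseteq I(\overline{\mu^0})$, and then $A=A\cap I(\overline{\mu^0})\subseteq{\mathcal J}$. With that paragraph inserted your proof is complete, and it even shows the maximality observation is dispensable (primality of ${\mathbb M}_n(P)$ suffices), whereas the paper's maximality shortcut simply settles the case $H_A=\emptyset$ instantly by concluding ${\mathcal J}=A$.
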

\begin{proof}
Pick $P\neq 0$ any prime ideal in $K[x,x^{-1}]$ and $M\in {\mathcal M}_\tau(E)$. As $K[x,x^{-1}]$ is an Euclidean domain, we have that every nonzero
prime ideal in $K[x,x^{-1}]$ is maximal.

On the other hand, by \cite[Lemma 2.1]{HS}, there exists a cycle $\mu$ contained in $M$ but without exits in $M$. This cycle is unique (up to a
permutation of its edges) and $\Omega(M)=\Omega(\mu^0)$. Let $H=E^0\setminus M\in {\mathcal H}_E$ and $F=E/H$. Note that $F^0=M$, and that by the way
that $F$ is defined, $\mu^0\subseteq F^0$ and $\mu^1\subseteq F^1$. The fact that $\mu\subseteq E$ does not have exits in $M$ translates to the fact
that $\mu$ does not have exits when seen inside the graph $F$. The same reasoning used in Proposition \ref{correspondencianograduadounlado} shows
that $I(\overline{\mu^0})\cong L(_{\overline{\mu^0}}F)\cong {\mathbb M}_m(K[x, x^{-1}])$ for some $m\in \mathbb{N}\cup\{\infty\}$. As in the proof of
Proposition \ref{correspondencianograduadounlado}, we can consider the $K$-algebra isomorphism $\phi_\mu:I(\overline{\mu^0})\to {\mathbb M}_n(K[x,
x^{-1}])$.

Clearly ${\mathbb M}_m(P)$ is a maximal ideal \cite{L} in ${\mathbb M}_m(K[x, x^{-1}])$ so that ${\mathcal J}=\phi_\mu^{-1}({\mathbb M}_m(P))$ is a
maximal ideal in $I(\overline{\mu^0})$. Using again \cite[Lemma 3.21]{T} and the fact that $I(\overline{\mu^0})$ has local units, we have that
${\mathcal J}$ is in fact an ideal of $L(F)$. We will show that it is prime in $L(F)$. Consider then $A,B$ ideals of $L(F)$ with ${\mathcal
J}\subseteq A,B$ and $AB\subseteq {\mathcal J}$. Write $H_A=A\cap F^0$ and $H_B=B\cap F^0$. We know that $H_A,H_B\in {\mathcal H}_F$.

Suppose that $H_A,H_B\neq \emptyset$, then an application of Corollary \ref{dentrodelideal} (i) gives that $\overline{\mu^0}\subseteq H_A\cap H_B$ so
that $I(\overline{\mu^0})\subseteq I(H_A\cap H_B)=I(H_A)I(H_B)\subseteq AB\subseteq {\mathcal J} \subsetneq I(\overline{\mu^0})$, where the last
containment is proper as ${\mathcal J}$ is a maximal ideal. This is a contradiction so that this case cannot happen.

Without loss of generality we may assume that $H_A=\emptyset$, in this case we apply Corollary \ref{dentrodelideal} (ii) to obtain that $A\subseteq
I(\overline{\mu^0})$ so that ${\mathcal J}\subseteq A\subseteq I(\overline{\mu^0})$. But ${\mathcal J}$ was a maximal ideal in $I(\overline{\mu^0})$
so that ${\mathcal J}=A$, as needed.

Then since ${\mathcal J}=J/I(H)$ is prime in $L(F)\cong L(E)/I(H)$, then $J$ is certainly prime in $L(E)$.

If $J$ is a graded ideal, then ${\mathcal J}$ would be graded too. Thus we have that ${\mathcal J}=I(H_{\mathcal J})$ for $H_{\mathcal J}={\mathcal
J}\cap F^0$, and as $P\neq 0$, we have ${\mathcal J}\neq 0$ so that $H_{\mathcal J}\neq \emptyset$. Thus, an application of Corollary
\ref{dentrodelideal} (i) shows that $\overline{\mu^0}\subseteq H_{\mathcal J}$. On the other hand, since $I(H_{\mathcal J})={\mathcal J}\subseteq
I(\overline{\mu^0})$, then we have that $H_{\mathcal J}=I(H_{\mathcal J})\cap F^0\subseteq I(\overline{\mu^0})\cap F^0=\overline{\mu^0}$. That is,
$H_{\mathcal J}=\overline{\mu^0}$, and consequently ${\mathcal J}=I(\overline{\mu^0})$. This implies, via the isomorphism $\phi_\mu$, that
$P=K[x,x^{-1}]$, which contradicts the fact that $P$ is prime.

Therefore we have associated a nongraded prime ideal $J$ in $L(E)$ to any  maximal tail $M\in {\mathcal M}_\tau(E)$ and a prime ideal $P$ in
$K[x,x^{-1}]$. So that we define $\Lambda(M,P)=J$.
\end{proof}

\begin{proposition}\label{correspondencianograduado} Let $E$ be a graph. There is a bijection between $${\mathcal M}_\tau(E)\times
\Spec(K[x,x^{-1}])^*\longleftrightarrow \Spec_\tau(L(E)).$$
\end{proposition}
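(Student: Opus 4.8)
The plan is to prove the statement by showing that the two maps already constructed, namely $\Theta$ of Proposition \ref{correspondencianograduadounlado} and $\Lambda$ of Proposition \ref{correspondencianograduadoelotrolado}, are mutually inverse. Establishing both $\Lambda\circ\Theta=\mathrm{id}$ and $\Theta\circ\Lambda=\mathrm{id}$ will at once give the asserted bijection between ${\mathcal M}_\tau(E)\times\Spec(K[x,x^{-1}])^*$ and $\Spec_\tau(L(E))$.

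The key observation I would record first is that both constructions manufacture exactly the same auxiliary data from a maximal tail $M\in{\mathcal M}_\tau(E)$. Indeed, in either direction one sets $H=E^0\setminus M\in{\mathcal H}_E$ (equivalently $H=J\cap E^0$ on the $\Theta$ side) and $F=E/H$, so that $F^0=M$; by \cite[Lemma 2.1]{HS} the graph $F$ has a unique exit-free cycle $\mu$ (up to a permutation of its edges), and the univocally defined isomorphism $\phi_\mu\colon I(\overline{\mu^0})\to{\mathbb M}_n(K[x,x^{-1}])$ coming from the comet structure of $_{\overline{\mu^0}}F$ is literally the same map, with the same $n$, in both proofs. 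Thus the two procedures differ only in that $\Theta$ sends $\mathcal{J}=J/I(H)$ to $P$ via $\phi_\mu(\mathcal{J})={\mathbb M}_n(P)$, while $\Lambda$ sends $P$ to $\mathcal{J}=\phi_\mu^{-1}({\mathbb M}_n(P))$ and then lifts $\mathcal{J}$ to $L(E)$ along $L(F)\cong L(E)/I(H)$.

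For $\Lambda\circ\Theta=\mathrm{id}$ I would take a non-graded prime ideal $J$, write $\Theta(J)=(M,P)$, and feed $(M,P)$ into $\Lambda$. Since $\Lambda$ recovers the identical $H$, $F$, $\mu$ and $\phi_\mu$, it forms $\phi_\mu^{-1}({\mathbb M}_n(P))=\phi_\mu^{-1}(\phi_\mu(\mathcal{J}))=\mathcal{J}$, and lifting $\mathcal{J}=J/I(H)$ back along $L(F)\cong L(E)/I(H)$ returns exactly $J$; hence $\Lambda(\Theta(J))=J$.

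For $\Theta\circ\Lambda=\mathrm{id}$ I would start with $(M,P)$, set $J=\Lambda(M,P)$, and compute $\Theta(J)$. The one point requiring genuine verification, which I expect to be the main obstacle, is that $\Theta$ recovers the same $H$, i.e.\ that $J\cap E^0=H$. Since $I(H)\subseteq J$ by construction we have $H\subseteq J\cap E^0$, so it suffices to show $\mathcal{J}\cap F^0=\emptyset$. Writing $H_{\mathcal{J}}=\mathcal{J}\cap F^0\in{\mathcal H}_F$, if $H_{\mathcal{J}}\neq\emptyset$ then Corollary \ref{dentrodelideal}(i) forces $\overline{\mu^0}\subseteq H_{\mathcal{J}}$, whence $I(\overline{\mu^0})\subseteq I(H_{\mathcal{J}})\subseteq\mathcal{J}$, contradicting the proper inclusion $\mathcal{J}\subsetneq I(\overline{\mu^0})$ that holds because $P\neq K[x,x^{-1}]$. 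Thus $H_{\mathcal{J}}=\emptyset$, so $J\cap E^0=H$ and $M=E^0\setminus H$ is recovered; applying $\phi_\mu$ to $\mathcal{J}=J/I(H)$ then returns ${\mathbb M}_n(P)$, so $P$ is recovered as well and $\Theta(\Lambda(M,P))=(M,P)$. Together these two computations show $\Theta$ and $\Lambda$ are inverse bijections.
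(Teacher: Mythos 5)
Your proof is correct and follows essentially the same route as the paper: both arguments verify that the maps $\Theta$ and $\Lambda$ of Propositions \ref{correspondencianograduadounlado} and \ref{correspondencianograduadoelotrolado} are mutually inverse, with the only delicate point being that $\Theta$ applied to $J=\Lambda(M,P)$ recovers $H=J\cap E^0$. If anything, your verification that ${\mathcal J}\cap F^0=\emptyset$ (via Corollary \ref{dentrodelideal}(i) together with the proper inclusion ${\mathcal J}\subsetneq I(\overline{\mu^0})$ coming from $P\neq K[x,x^{-1}]$) spells out a detail that the paper merely asserts was ``obtained'' in the $\Lambda$-process.
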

\begin{proof}
By following the correspondences consecutively in Propositions \ref{correspondencianograduadounlado} and \ref{correspondencianograduadoelotrolado}
one can check that $\Theta$ and $\Lambda$ are inverses one another. Concretely the equation $$\Lambda \Theta=1|_{\Spec_\tau(L(E))}$$ can be checked
with no difficulty, and the only nontrivial part of proving $$\Theta \Lambda=1|_{{\mathcal M}_\tau(E)\times \Spec(K[x,x^{-1}])^*}$$ arises when we
have $J=\Lambda(M,P)$ and we would like to establish that, in order to apply $\Theta$, we obtain $H'=H$ and therefore $M'=M$ and so on. This is so
because when defining $J$ in the $\Lambda$-process, we obtained ${\mathcal J}\cap F^0=\emptyset$ so that $J\cap E^0\subseteq H$, as $F=E/H$ and
${\mathcal J}=J/I(H)$. But the latter implies $I(H)\subseteq J$, and therefore $H=I(H)\cap E^0\subseteq J\cap E^0\subseteq H$. That is, $H'=J\cap
E^0=H$, and the rest follows trivially.
\end{proof}

Putting together Proposition \ref{correspondencianograduado} and Lemma \ref{gradedprime}, we obtain the main result of this section.

\begin{theorem}\label{correspondenciatotal} Let $E$ be a graph. There is a bijection between $${\mathcal M}(E)\cup ({\mathcal M}_\tau(E)\times
\Spec(K[x,x^{-1}])^*)\longleftrightarrow \Spec(L(E)).$$
\end{theorem}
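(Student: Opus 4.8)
The plan is to split both sides of the claimed bijection according to the graded/non-graded dichotomy and to treat each piece separately, using the two ingredients cited just before the statement. Recall from the Notation that $\Spec(L(E))=\Spec_\gamma(L(E))\cup\Spec_\tau(L(E))$, and that this is a \emph{disjoint} union, since an ideal cannot be simultaneously graded and non-graded. Correspondingly, I would read the left-hand side ${\mathcal M}(E)\cup({\mathcal M}_\tau(E)\times\Spec(K[x,x^{-1}])^*)$ as a disjoint union as well: its two constituents consist of objects of different nature (subsets of $E^0$ on the one hand, pairs on the other), so they cannot overlap. Thus it suffices to produce a bijection ${\mathcal M}(E)\leftrightarrow\Spec_\gamma(L(E))$ for the graded part, to invoke Proposition \ref{correspondencianograduado} for the non-graded part, and then to take the union of the two.

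For the graded part I would use the correspondence recalled in the Preliminaries: an ideal $J$ of $L(E)$ is graded if and only if $J=I(H)$ with $H=J\cap E^0\in{\mathcal H}_E$, and the assignment $H\mapsto I(H)$ is a bijection between ${\mathcal H}_E$ and the graded ideals of $L(E)$. I would then define $\Psi\colon{\mathcal M}(E)\to\Spec_\gamma(L(E))$ by $\Psi(M)=I(E^0\setminus M)$. This is well defined: by Lemma \ref{maximaltailhersat} the complement $H=E^0\setminus M$ lies in ${\mathcal H}_E$, and by Proposition \ref{gradedprime} the ideal $I(H)$ is graded prime. Injectivity is immediate from the injectivity of $H\mapsto I(H)$, since distinct maximal tails have distinct complements $H$. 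Surjectivity is the reverse implication of Proposition \ref{gradedprime}: any graded prime ideal equals $I(H)$ for some $H\in{\mathcal H}_E$, and then $E^0\setminus H$ is a maximal tail mapping to it. Hence $\Psi$ is a bijection.

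For the non-graded part, Proposition \ref{correspondencianograduado} already furnishes a bijection ${\mathcal M}_\tau(E)\times\Spec(K[x,x^{-1}])^*\leftrightarrow\Spec_\tau(L(E))$, realised by the mutually inverse maps $\Theta$ and $\Lambda$. The bijection asserted by the theorem is then simply the disjoint union of $\Psi$ and $\Lambda$: an element $M\in{\mathcal M}(E)$ is sent to the graded prime ideal $I(E^0\setminus M)$, while a pair $(M,P)\in{\mathcal M}_\tau(E)\times\Spec(K[x,x^{-1}])^*$ is sent to the non-graded prime ideal $\Lambda(M,P)$. Since $\Psi$ surjects onto $\Spec_\gamma(L(E))$ and $\Lambda$ onto $\Spec_\tau(L(E))$, and these two target sets partition $\Spec(L(E))$, the glued map is a bijection onto all of $\Spec(L(E))$.

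I do not anticipate a serious obstacle, as the substantive work is already carried by Propositions \ref{gradedprime} and \ref{correspondencianograduado}; the only point that genuinely requires care is the bookkeeping of disjointness on both sides, so that the two partial bijections assemble into one global bijection with no conflict. As a sanity check it is worth observing that the zero ideal, when it is prime, is graded and corresponds under $\Psi$ to the maximal tail $M=E^0$ (whose complement is $\emptyset$, giving $I(\emptyset)=0$); thus it is accounted for exactly once, on the graded side, and is not duplicated by the non-graded correspondence.
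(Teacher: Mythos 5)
Your proposal is correct and follows essentially the same route as the paper, whose proof consists precisely of ``putting together'' Proposition \ref{correspondencianograduado} for the non-graded part and Proposition \ref{gradedprime} (combined with the standard correspondence $H\mapsto I(H)$ between hereditary saturated sets and graded ideals) for the graded part. You have merely written out the bookkeeping---the disjointness of the two sides and the gluing of the two partial bijections---that the paper leaves implicit.
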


\begin{remark} {\rm This Theorem is the algebraic version of \cite[Corollary 2.12]{HS}. Note that the role of ${\mathbb T}$ in that result
is played in Theorem \ref{correspondenciatotal} by
$\Spec(K[x,x^{-1}])^*$. This replacement agrees with the fact that
both $K[x,x^{-1}]$ and ${\mathbb T}$ are attached to the same
underlying graph in the following sense: $K[x,x^{-1}]$ is the
Leavitt path algebra of the loop graph $E$ given by

$$\xymatrix{{\bullet} \ar@(ur,ul) }$$ whereas the continuous functions over ${\mathbb T}$ is
precisely the graph C*-algebra of that graph, that is, $C^*(E)\cong C({\mathbb T})$.}
\end{remark}

Note that although $L(E)$ is always semiprime (see for instance \cite[Proposition 1.1]{AMMS}), is it not necessarily prime, and in fact we can prove
the following easy corollary

\begin{corollary}\label{whenisprime} Let $E$ be a graph. $L(E)$ is prime if and only if $E\in {\mathcal M}(E)$ if and only if $E$ satisfies
Condition {\rm (MT3)}.
\end{corollary}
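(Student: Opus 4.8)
The plan is to reduce both equivalences to facts already established for graded prime ideals. The key observation is that $L(E)$ being prime is exactly the statement that the zero ideal $\{0\}$ is a prime ideal, and that $\{0\}=I(\emptyset)$ is a graded ideal (it is generated by idempotents, with $\{0\}\cap E^0=\emptyset\in\mathcal{H}_E$). By the cited consequence of \cite[Proposition II.1.4]{NvO}, for the $\mathbb{Z}$-grading on $L(E)$ a graded ideal is prime if and only if it is graded prime; hence $L(E)$ is prime precisely when $\{0\}$ is graded prime. This first reduction is the only step that needs a moment's care, since $L(E)$ may be nonunital, but primeness is still characterized by $\{0\}$ being a prime ideal.

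First I would obtain the first equivalence by applying Proposition \ref{gradedprime} with $H=\emptyset$. Since $I(\emptyset)=\{0\}$, that proposition says $\{0\}$ is graded prime if and only if $M=E^0\setminus\emptyset=E^0$ is a maximal tail, that is, if and only if $E\in\mathcal{M}(E)$. Combined with the reduction of the previous paragraph, this yields ``$L(E)$ is prime if and only if $E\in\mathcal{M}(E)$''.

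For the second equivalence I would show that, for the full vertex set $M=E^0$, Conditions {\rm (MT1)} and {\rm (MT2)} hold automatically, so that being a maximal tail is equivalent to Condition {\rm (MT3)} alone. By Lemma \ref{maximaltailhersat}, $M=E^0$ satisfies {\rm (MT1)} and {\rm (MT2)} if and only if $H=E^0\setminus E^0=\emptyset$ lies in $\mathcal{H}_E$; and $\emptyset$ is vacuously hereditary and saturated. Therefore $E^0\in\mathcal{M}(E)$ if and only if $E^0$ satisfies {\rm (MT3)}, which is exactly the statement that $E$ satisfies Condition {\rm (MT3)}.

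There is no genuine obstacle here, which is why the statement is billed as an easy corollary: both equivalences are immediate once one recognizes that primeness of $L(E)$ amounts to $\{0\}$ being prime, that $\{0\}$ is graded, and that the empty hereditary saturated set makes {\rm (MT1)} and {\rm (MT2)} vacuous for $M=E^0$. Thus Proposition \ref{gradedprime} and Lemma \ref{maximaltailhersat} apply directly and no fresh argument is required.
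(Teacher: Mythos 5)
Your proof is correct and takes essentially the same route as the paper: reduce primeness of $L(E)$ to the zero ideal $\{0\}=I(\emptyset)$ being (graded) prime, apply the maximal-tail characterization, and then observe via Lemma \ref{maximaltailhersat} that {\rm (MT1)} and {\rm (MT2)} are automatic for $M=E^0$ since $\emptyset\in\mathcal{H}_E$. The only cosmetic difference is that you cite Proposition \ref{gradedprime} (together with the N\u{a}st\u{a}sescu--van Oystaeyen fact) directly, whereas the paper invokes the correspondence of Theorem \ref{correspondenciatotal}, whose graded part is exactly that proposition.
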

\begin{proof}
$L(E)$ is prime if and only if $\{0\}=I(\emptyset)\in \Spec(E)$. Then by the way the correspondence in Theorem \ref{correspondenciatotal} is defined,
this occurs precisely when $E^0\setminus \emptyset=E^0\in {\mathcal M}(E)$. Then, as $\emptyset$ is always a hereditary and saturated subset of
$E^0$, Lemma \ref{maximaltailhersat} yields that $E^0$ always satisfies Conditions (MT1) and (MT2). Hence, $E^0\in {\mathcal M}(E)$ if and only if
$E^0$ satisfies Condition (MT3).
\end{proof}

\section{Primitive Leavitt path algebras}

Having completely determined the prime Leavitt path algebras, the natural next step is to be able to proceed in the same way with the primitive ones
(every primitive algebra is in particular prime, and the reverse implication holds for instance for the class of separable C*-algebras \cite{KT}, and
consequently for the class of graph C*-algebras).

In view of Corollary \ref{whenisprime}, and contrasting with the graph C*-algebra situation, next lemma shows that among the class of Leavitt path
algebras, the notions of primeness and primitivity do not coincide.

\begin{lemma}\label{Mtaunotprimitive} If $E\in {\mathcal M}_\tau(E)$, then $L(E)$ is not left (nor right) primitive.
\end{lemma}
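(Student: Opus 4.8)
The plan is to exploit the cycle without exits that the hypothesis provides and to ``localise'' any putative faithful simple module at a vertex of that cycle. First I would unpack the hypothesis: $E\in\mathcal{M}_\tau(E)$ means that $E^0$ is a maximal tail (so that $L(E)$ is prime by Corollary \ref{whenisprime}) and that $E^0\notin\mathcal{M}_\gamma(E)$, i.e. there is a closed simple path $c=e_1\cdots e_k$ in $E^0$ with no exit in $E^0$. Since $M=E^0$, this is simply a cycle $c$ without exits, based at $v:=s(c)$. The absence of exits means that each $s(e_i)$ emits only $e_i$, so relation $(4)$ gives $e_ie_i^\ast=s(e_i)$; telescoping, one gets $cc^\ast=c^\ast c=v$, and every path that starts and ends at $v$ is a power of $c$. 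Consequently the corner $vL(E)v$ is spanned over $K$ by $\{c^{\,n}\mid n\in\Z\}$ (with $c^{-n}:=(c^\ast)^n$) and $vL(E)v\cong K[x,x^{-1}]$ as a $K$-algebra, with $v$ the identity and $c\leftrightarrow x$. This is exactly the $1\times 1$ corner of the comet description $I(\overline{c^0})\cong\mathbb{M}_n(K[x,x^{-1}])$ coming from \cite[Proposition 3.5]{AAPS}.

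Next I would argue by contradiction. Suppose $L(E)$ is left primitive and let $S$ be a faithful simple left $L(E)$-module. Because $\Ann(S)=0$ we cannot have $L(E)S=0$, so $L(E)S$ is a nonzero submodule and $L(E)S=S$. I claim $vS\neq 0$: otherwise $L(E)vL(E)\cdot S=L(E)v\,(L(E)S)=L(E)(vS)=0$, so the nonzero two-sided ideal $L(E)vL(E)$ (it contains $v=v\cdot v\cdot v\neq 0$) would lie in $\Ann(S)=0$, which is absurd. Now the standard corner argument applies: for any $0\neq vs\in vS$ one has $L(E)(vs)=S$ by simplicity, whence $(vL(E)v)(vs)=v\bigl(L(E)(vs)\bigr)=vS$. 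Thus $vS$ is a simple left module over $vL(E)v\cong K[x,x^{-1}]$.

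Finally, $vS$ is a simple module over the commutative ring $vL(E)v\cong K[x,x^{-1}]$, so its annihilator $P=\Ann_{vL(E)v}(vS)$ is a maximal ideal; since $K[x,x^{-1}]$ is not a field, $P\neq 0$. Choosing $0\neq f\in P\subseteq vL(E)v$ and using $f=fv$, we obtain $fS=(fv)S=f(vS)=0$, so $0\neq f\in\Ann(S)=0$, a contradiction. Hence $L(E)$ is not left primitive. The right-hand case is then immediate from the $K$-linear anti-automorphism of $L(E)$ that fixes the vertices and interchanges $e\leftrightarrow e^\ast$ (so $pq^\ast\mapsto qp^\ast$), which yields $L(E)\cong L(E)^{\mathrm{op}}$ and makes left and right primitivity coincide.

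I expect the main obstacle to be the non-unital bookkeeping rather than any deep idea: one must justify $L(E)S=S$ and $vS\neq 0$ carefully through the local units and the faithfulness of $S$, and one must pin down the identification $vL(E)v\cong K[x,x^{-1}]$, which is precisely where the \emph{no-exit} assumption is indispensable (a cycle with an exit yields neither an invertible $c$ in the corner nor a Laurent-polynomial corner). It is worth noting that primeness of $L(E)$, although bundled into the hypothesis, plays no role in the argument: all that is used is the existence of one cycle without exits, so the lemma already signals that Condition (L) will be necessary for primitivity.
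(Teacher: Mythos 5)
Your proposal is correct and follows essentially the same route as the paper's proof: locate a cycle $c$ without exits based at $v$, identify the corner $vL(E)v\cong K[x,x^{-1}]$, observe that this commutative ring is not a field and hence not primitive, and conclude because primitivity passes to corners. The only difference is one of packaging --- the paper cites \cite[Lemma 2.1]{HS}, \cite[Proof of Theorem 4.3]{APS} and the standard corner fact, whereas you re-derive these inline (the telescoping $cc^*=c^*c=v$ computation, the explicit annihilator argument playing the role of ``corners of primitive rings are primitive,'' and the involution $e\leftrightarrow e^*$ for the right-sided case) --- and your closing observation that only the existence of a no-exit cycle is used, not primeness or uniqueness of the cycle, is accurate.
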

\begin{proof}
Apply again \cite[Lemma 2.1]{HS} to find $\mu$ the only cycle without exits of $E$, and suppose that $\mu$ is based at the vertex $v$. By repeating
the arguments in \cite [Proof of Theorem 4.3]{APS} we obtain that $K[x,x^{-1}]\cong vL(E)v$, which is not a primitive ring (note that a commutative
ring is primitive if and only if it is a field). Clearly, as corners of primitive rings are primitive, then we get the result.
\end{proof}

\begin{lemma}\label{Pointlineprimitive} If $E\in {\mathcal M}(E)$ and $P_l(E)\neq \emptyset$, then $L(E)$ is left (and right) primitive.
\end{lemma}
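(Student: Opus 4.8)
The plan is to produce a faithful simple one-sided $L(E)$-module, which is exactly what (left, resp.\ right) primitivity asks for. The two hypotheses supply the two ingredients separately: $E \in \mathcal{M}(E)$ will give \emph{primeness} of $L(E)$, and $P_l(E) \neq \emptyset$ will give a \emph{minimal one-sided ideal}. The guiding principle is the standard one that a prime ring possessing a minimal left (resp.\ right) ideal is automatically left (resp.\ right) primitive.

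First I would observe that $E \in \mathcal{M}(E)$ means exactly that $E^0$ is a maximal tail, so by Corollary \ref{whenisprime} the algebra $L(E)$ is prime. Next I would fix a line point $v \in P_l(E)$ and use the known properties of line points: for such $v$ the left ideal $S := L(E)v$ is a \emph{minimal} left ideal, hence a nonzero simple left $L(E)$-module (it contains the idempotent $v \neq 0$), and symmetrically $vL(E)$ is a minimal right ideal.

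It then remains to check that $S$ is faithful, and this is the only step where primeness is actually used. The annihilator $A=\{r\in L(E)\mid rS=0\}$ is a two-sided ideal with $A\cdot L(E)v=0$, whence $A\cdot\bigl(L(E)vL(E)\bigr)=0$. Since $v\in L(E)vL(E)$, the latter is a nonzero ideal; if $A$ were nonzero we would have two nonzero ideals with zero product, contradicting primeness. Thus $A=0$, so $S$ is a faithful simple left module and $L(E)$ is left primitive. Repeating the argument verbatim with the minimal right ideal $vL(E)$ gives right primitivity.

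I do not expect a serious obstacle here, since the proof is really an assembly of earlier results; the points that need care are (i) the faithfulness step, where one must keep track of the fact that $L(E)$ has local units so that $L(E)v$, $vL(E)$ and the two-sided ideal $L(E)vL(E)$ are genuinely nonzero, and (ii) the left/right symmetry, which for a general ring is not free but here is guaranteed because one and the same line point $v$ yields both a minimal left ideal and a minimal right ideal.
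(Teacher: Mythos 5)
Your proof is correct and follows essentially the same route as the paper: take a line point $v$, invoke the result that $L(E)v$ is a minimal left ideal (hence a simple left module), use primeness of $L(E)$ (guaranteed by $E\in{\mathcal M}(E)$) to get faithfulness of this module, and argue dually for right primitivity. Your faithfulness step via the two-sided ideal $L(E)vL(E)$ is just a slightly more detailed version of the paper's one-line argument that $aL(E)v=0$ with $v\neq 0$ forces $a=0$ in a prime ring.
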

\begin{proof}
Pick $v\in P_l(E)$ and use \cite[Theorem 2.9]{AMMS} to get that $M=L(E)v$ is a minimal left ideal of $L(E)$, or in other words, $M$ is a simple left
$L(E)$-module. Now consider $a\in L(E)$ such that $aM=aL(E)v=0$. As $v\neq 0$ and $L(E)$ is prime, we get that $a=0$, so that $M$ is a simple and
faithful left $L(E)$-module. This shows that $L(E)$ is left primitive. By proceeding dually we get that $L(E)$ is right primitive too.
\end{proof}

Recall that a ring $R$ is right primitive if and only if there exists a simple and faithful right $R$-module $M$. Given that the focus at this point
is on determining when a Leavitt path algebra $L(E)$ is (right) primitive, it is evident that a knowledge of the simple (and faithful) right
$L(E)$-modules is required. This is done in the next few results.

\begin{lemma}\label{modulosimple} If $M$ is a simple right $L(E)$-module, then $M\cong vL(E)/J$, for some
$v\in E^0$ and some right $L(E)$-module $J$, maximal (as a right $L(E)$-module) in $vL(E)$.
\end{lemma}
\begin{proof}
We know that $M\cong L(E)/I$ for some maximal right ideal $I$ of $L(E)$. Take $v\in E^0$ such that $v\not\in I$. By the maximality of $I$,
$I+vL(E)=L(E)$. So, $M\cong L(E)/I\cong (I+vL(E))/I \cong vL(E)/(I\cap vL(E))$. Observe that $J=I\cap vL(E)$ is a right $L(E)$-module, maximal in
$vL(E)$.
\end{proof}

We will denote by $\Mod$-$L$ the category of all right $L$-modules.

\begin{proposition}\label{simplesdevertices} Let $E$ be a graph. For a vertex $u\in E^0$, define the set $${\mathcal
S}_u=\{M\in \Mod\hbox{-}L\ |\ M\cong uL(E)/J, \hbox{where }J\hbox{ is a maximal right submodule of }uL(E)\}.$$ Let $u,v\in E^0$ and $\alpha$ a path
with $s(\alpha)=u$ and $r(\alpha)=v$. Then
\begin{enumerate}[{\rm (i)}]
\item ${\mathcal S}_v = {\mathcal S}_u$.
\item If $J$ is a maximal right submodule of $uL(E)$, then $uL(E)/J\cong vL(E)/\alpha^*J$.
\end{enumerate}
\end{proposition}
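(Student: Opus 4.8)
The plan is to build everything out of the two one-sided inverse module maps attached to $\alpha$. Writing $q=\alpha\alpha^*$, I would first record the elementary identities $\alpha=u\alpha=\alpha v$, $\alpha^*=v\alpha^*=\alpha^* u$, $\alpha^*\alpha=v$ and $q^2=q=uq=qu$, so that $q$ is an idempotent dominated by $u$. Left multiplication then gives right $L(E)$-module homomorphisms $\theta\colon uL(E)\to vL(E)$, $x\mapsto\alpha^*x$, and $\lambda\colon vL(E)\to uL(E)$, $y\mapsto\alpha y$; these are well defined precisely because $\alpha^*=v\alpha^*u$ and $\alpha=u\alpha v$. The decisive computation is $\theta\circ\lambda=\mathrm{id}_{vL(E)}$, coming from $\alpha^*\alpha=v$, which shows that $\theta$ is a split surjection and yields the decomposition $uL(E)=qL(E)\oplus(u-q)L(E)$ with $qL(E)\cong vL(E)$ (via $\theta$ and $\lambda$) and $\ker\theta=(u-q)L(E)$. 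The argument never uses the length of $\alpha$: only these four identities and the idempotent $q$ enter, so I would carry a general path throughout rather than reducing to a single edge (though for (i) one could alternatively chain the single-edge case by transitivity).

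For (ii) I would push $J$ through $\theta$. Since $\theta$ is surjective and sends $J$ onto $\alpha^*J$, it induces a surjection $\bar\theta\colon uL(E)/J\to vL(E)/\alpha^*J$. Because $J$ is maximal, the domain $uL(E)/J$ is simple, so $\bar\theta$ is either zero or an isomorphism; the isomorphism case is exactly the asserted $uL(E)/J\cong vL(E)/\alpha^*J$. The heart of the matter is to decide when $\bar\theta$ does not vanish, and here the computation $\ker\theta=(u-q)L(E)$ is what I would exploit: $\bar\theta\neq 0$ iff $\alpha^*J\neq vL(E)$ iff $(u-q)L(E)\subseteq J$, i.e. iff the simple quotient $uL(E)/J$ is supported on the summand $qL(E)\cong vL(E)$ and not on its complement. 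In that case one can even write down the inverse explicitly: $(u-q)L(E)\subseteq J$ forces $qJ\subseteq J$, so $\lambda$ descends to $\bar\lambda\colon vL(E)/\alpha^*J\to uL(E)/J$, and $\bar\theta$, $\bar\lambda$ are mutually inverse.

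For (i) I would argue by double inclusion, again through $\theta$. The inclusion $\mathcal S_v\subseteq\mathcal S_u$ is the routine half: for a maximal submodule $N$ of $vL(E)$, the preimage $\theta^{-1}(N)$ is a maximal submodule of $uL(E)$ (preimage of a maximal submodule under a surjection), and surjectivity of $\theta$ gives $uL(E)/\theta^{-1}(N)\cong vL(E)/N$, so every member of $\mathcal S_v$ appears in $\mathcal S_u$. For the reverse inclusion $\mathcal S_u\subseteq\mathcal S_v$ I would feed each maximal $J\subseteq uL(E)$ into (ii): whenever $(u-q)L(E)\subseteq J$ the isomorphism of (ii) places $uL(E)/J\cong vL(E)/\alpha^*J$ in $\mathcal S_v$, and the remaining maximal submodules $J$ — those not containing $\ker\theta$, equivalently those governed by the complementary summand $(u-q)L(E)$ — must be matched separately.

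The step I expect to be the main obstacle is exactly this last one: controlling the maximal submodules of $uL(E)$ that are \emph{not} detected by $\theta$, namely those with $(u-q)L(E)\not\subseteq J$, for which $\alpha^*J=vL(E)$ and the map of (ii) degenerates. Identifying each corresponding simple quotient of the orthogonal summand $(u-q)L(E)$ as an element of $\mathcal S_v$ is where one must use the structural hypotheses on how $u$ and $v$ sit in the graph, rather than the formal properties of $\theta$ alone; keeping precise track of this summand is what makes the two inclusions in (i) close up.
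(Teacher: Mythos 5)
Everything you actually prove coincides with the paper's own proof of this proposition: the paper uses the same homomorphism $\varphi\colon uL(E)\to vL(E)$, $x\mapsto\alpha^*x$ (your $\theta$), computes $\Ker(\varphi)=(u-\alpha\alpha^*)L(E)$, obtains ${\mathcal S}_v\subseteq{\mathcal S}_u$ by pulling maximal submodules back through $\varphi$, and proves (ii) under the hypothesis $\Ker(\varphi)\subseteq J$ exactly as you do. The case you flag as the outstanding obstacle, namely maximal $J$ with $J+\Ker(\varphi)=uL(E)$, is the case the paper disposes of by ``applying twice the Third Isomorphism Theorem'', and that is precisely where the paper's proof breaks down: writing $M=uL(E)/(J\cap\Ker(\varphi))$, $A=J/(J\cap\Ker(\varphi))$ and $B=\Ker(\rho)=\Ker(\varphi)/(J\cap\Ker(\varphi))$, the middle step asserts $M/A\cong(M/B)/\bigl((A+B)/B\bigr)$, and the right-hand side is (by the correct form of the theorem) $M/(A+B)$; this identification requires $B\subseteq A$, i.e. $\Ker(\varphi)\subseteq J$, which is exactly the opposite case. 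Indeed, when $J+\Ker(\varphi)=uL(E)$ one has $A+B=M$, so the paper's chain of isomorphisms terminates at the zero module while its left end $uL(E)/J$ is simple.

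Moreover, the gap you left open cannot be closed, because the statement is false precisely there. As you computed, $J+\Ker(\varphi)=uL(E)$ forces $\alpha^*J=vL(E)$, so (ii) would make the simple module $uL(E)/J$ isomorphic to $vL(E)/\alpha^*J=0$. Such $J$ do occur: write $L=L(E)$ and let $E$ have vertices $u,v,w$ and edges $e\colon u\to v$, $f\colon u\to w$, with $\alpha=e$. Then $L\cong M_2(K)\oplus M_2(K)$ and $uL=ee^*L\oplus ff^*L$ is a direct sum of the two non-isomorphic simple modules $ee^*L\cong vL$ and $ff^*L\cong wL$. Hence $J=ee^*L$ is a maximal submodule of $uL$ with $\Ker(\varphi)=ff^*L\not\subseteq J$, and $e^*J=e^*L=vL$ (note $v=e^*e\in e^*L$), so $vL/e^*J=0$ and (ii) fails; also $wL\cong uL/J$ lies in ${\mathcal S}_u$, whereas every member of ${\mathcal S}_v$ is isomorphic to $vL$ (because $vL$ is simple) and $wL\not\cong vL$, so (i) fails as well. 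Your closing suspicion is therefore exactly right, and stronger than you stated: the simple quotients supported on the complementary summand $(u-\alpha\alpha^*)L$ cannot be matched by any argument, formal or otherwise, without extra hypotheses on how $u$ and $v$ sit in the graph. (The counterexample violates Condition (MT3); that condition is in force in the paper's eventual application of this proposition in Theorem \ref{primitivos}, but the proposition as stated assumes nothing of the sort, and the paper's proof of the offending case is fallacious in any event.)
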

\begin{proof}
Denote $L(E)$ by $L$. Define the following map $$\begin{matrix} \varphi: & uL & \to & vL \cr
                                                   & x & \mapsto & \alpha^*x \end{matrix}$$
Since $\alpha^*u\alpha=v$, $\varphi$ is an epimorphism of right $L$-modules whose kernel is $(u-\alpha\alpha^*)L$. Then, $uL/\Ker(\varphi)\cong vL$
via the isomorphism $$\begin{matrix} \overline{\varphi}: & uL/\Ker(\varphi) & \to & vL \cr
                                                         & x+\Ker(\varphi) & \mapsto & \alpha^*x \end{matrix}$$

Let us see first that ${\mathcal S}_v \subseteq {\mathcal S}_u$. Let $T$ be a maximal submodule of $vL$. By using the isomorphism
$\overline{\varphi}$ we know that there exists $J$ a submodule of $uL$ such that $\Ker(\varphi)\subseteq J$ and $J/\Ker(\varphi)\cong T$. Then we
have $$vL/T\cong (uL/\Ker(\varphi))/(J/\Ker(\varphi))\cong uL/J.$$

Now we will check that ${\mathcal S}_u \subseteq {\mathcal S}_v$. Suppose first that $J+\Ker(\varphi)=uL$. Consider $$\begin{matrix} \rho: &
uL/(J\cap \Ker(\varphi)) & \to & vL \cr  & y + (J\cap \Ker(\varphi)) & \mapsto & \alpha^* y \end{matrix}$$ It is well-defined because $y\in J\cap
\Ker(\varphi)\subseteq \Ker(\varphi)$ means $y=(u-\alpha\alpha^*)y$, which implies $\alpha^*y=0$.

Clearly, it is surjective, as $ \alpha^*\alpha =v$ and $s(\alpha)=u$, and therefore $(uL/(J\cap \Ker(\varphi)))/\Ker(\rho)\cong vL$ via the
isomorphism $\overline{\rho}$ given by $(y+(J\cap \Ker(\varphi)))+\Ker(\rho) \mapsto \alpha^*y$. Apply twice the Third Isomorphism Theorem to obtain
$$uL/J\cong (uL/(J\cap \Ker(\varphi)))/(J/(J\cap \Ker(\varphi)))\cong $$ $$ \Big(\big(uL/(J\cap
\Ker(\varphi))\big)/\Ker(\rho)\Big)/\Big(\big(J/(J\cap \Ker(\varphi))+\Ker(\rho)\big)/\Ker(\rho)\Big)\cong vL/\alpha^*J$$ since $\alpha^*J$ is the
image of $\big(J/(J\cap \Ker(\varphi))+\Ker(\rho)\big)/\Ker(\rho)$ by $\overline{\rho}$.

Suppose now that $\Ker(\varphi)\subseteq J$. Then, $(uL/\Ker(\varphi))/(J/\Ker(\varphi))\cong uL/J$. As $uL/J$ is a simple module, $J/\Ker(\varphi)$
is maximal inside $uL/\Ker(\varphi)$. Using the isomorphism $\overline{\varphi}$ we have that $\overline{\varphi}(J/\Ker(\varphi))=\alpha^*J$ is a
maximal submodule of $vL$ and $uL/J\cong (uL/\Ker(\varphi))/(J/\Ker(\varphi))\cong vL/\alpha^*J$.
\end{proof}

\begin{proposition}\label{quotientedges} Let $E$ be a graph, $u$ a vertex with $|s^{-1}(u)|\geq 2$, and $uL(E)/J$ a simple
right $L(E)$-module. Then $$uL(E)/J\cong vL(E)/e^*L(E)$$ for some $e\in s^{-1}(u)$, being $v=r(e)$.
\end{proposition}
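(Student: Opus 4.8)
The plan is to use the Cuntz-Krieger relation (CK2) at the vertex $u$ to decompose $u$ into orthogonal pieces, and then to propagate the maximality of $J$ through this decomposition. Since $|s^{-1}(u)|\geq 2$, relation (4) gives $u=\sum_{\{e\in E^1\mid s(e)=u\}}ee^*$, so that $uL=\bigoplus_{e\in s^{-1}(u)}ee^*L$ as a direct sum of right $L$-modules, where each summand $ee^*L\cong r(e)L=vL$ via left multiplication by $e^*$ (using $e^*ee^*=e^*$ and $e^*e=r(e)$). The key observation is that the simple quotient $uL/J$ cannot have all of these summands collapse into $J$: if every $ee^*\in J$ then $u=\sum ee^*\in J$, forcing $J=uL$, contradicting that $uL/J$ is simple (hence $J$ proper). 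So there is at least one $e\in s^{-1}(u)$ with $ee^*\notin J$.

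The main work, and what I expect to be the crux, is upgrading ``$ee^*\notin J$ for some $e$'' to the precise isomorphism $uL/J\cong vL/e^*L$ with $v=r(e)$. First I would fix such an $e$ and consider the composite map $vL\to uL\to uL/J$ given by $x\mapsto ex\mapsto ex+J$ (well-defined since $s(e)=u$ means $ex\in uL$). This is a homomorphism of right $L$-modules; its image is $(eL+J)/J$, which is nonzero because $e=e\cdot r(e)$ maps to $e+J$, and $e\notin J$ (as $ee^*\notin J$ forces $e\notin J$, since $e\in J$ would give $ee^*\in J$). Since $uL/J$ is simple, the image must be all of $uL/J$, so the map is surjective. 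Its kernel is $\{x\in vL\mid ex\in J\}$, and I would identify this kernel with $e^*L$: the containment $e^*L\subseteq$ kernel follows once one checks $e\cdot e^*y\in J$, which I would deduce from the structure of $J$ together with the CK relations; for the reverse containment one uses $x=r(e)x=e^*ex$ to write any kernel element in the form $e^*(ex)$ with $ex\in J$.

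The remaining point is to confirm that the kernel is exactly $e^*L$ rather than something larger, i.e. that $e^*L$ is itself a \emph{maximal} submodule of $vL$ so that the induced map $vL/e^*L\to uL/J$ is a genuine isomorphism of simple modules (a surjection between simple modules with the target simple is automatically an isomorphism once the source is shown simple, or equivalently once the kernel is shown maximal). By the First Isomorphism Theorem the surjection already yields $vL/\ker\cong uL/J$, and since $uL/J$ is simple, $\ker$ is automatically maximal in $vL$; thus it suffices to prove $\ker=e^*L$. I expect the genuine obstacle to lie precisely in this kernel computation: showing that $ex\in J$ forces $x\in e^*L$ requires using that the other summands $ff^*$ (for $f\neq e$) interact with $J$ correctly, and here one leans on relation (3), $e^*f=\delta_{e,f}r(e)$, to separate the branch at $e$ from the others. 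Once $\ker=e^*L$ is established, the isomorphism $uL/J\cong vL/e^*L$ follows immediately and the proof is complete.
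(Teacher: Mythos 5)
Your opening moves are sound and coincide with the paper's starting point: relation (4) gives $uL=\bigoplus_{g\in s^{-1}(u)}gg^*L$, not every $gg^*$ can lie in $J$ (else $u=\sum_g gg^*\in J$ and $J=uL$), and for an edge $e$ with $ee^*\notin J$ the map $\psi\colon vL\to uL/J$, $x\mapsto ex+J$, is a homomorphism of right $L$-modules with nonzero image (since $e\notin J$), hence surjective by simplicity, so that $uL/J\cong vL/\Ker(\psi)$ with $\Ker(\psi)=\{x\in vL \mid ex\in J\}$. The fatal problem is the kernel identification $\Ker(\psi)=e^*L$, which you defer (``I would deduce from the structure of $J$ together with the CK relations''): it is not a missing verification but a false claim for your choice of $e$, and indeed for every choice that keeps $\psi$ nonzero. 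The inclusion $e^*L\subseteq\Ker(\psi)$ says precisely that $ee^*y\in J$ for all $y\in L$, i.e. $ee^*L\subseteq J$, which (since $ee^*=ee^*u\in ee^*L$) is equivalent to $ee^*\in J$ --- the exact negation of the hypothesis $ee^*\notin J$ you imposed to make $\psi$ nonzero. Concretely, relation (3) gives $v=e^*e\in e^*L$, yet $\psi(v)=e+J\neq 0$, so $v\in e^*L\setminus\Ker(\psi)$; worse, $v\in e^*L$ forces $e^*L=vL$, so ``$\Ker(\psi)=e^*L$'' would say $\psi=0$, contradicting the surjectivity you just established. What your construction actually proves is $uL/J\cong vL/e^*(J\cap ee^*L)$, a quotient by a maximal submodule that depends on $J$; the entire content of the proposition is the passage from this $J$-dependent kernel to the fixed right ideal $e^*L$, and your route cannot supply it.

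The paper's proof is genuinely different and, notably, selects the opposite kind of edge. From $J\subseteq ee^*J\oplus\bigl(\bigoplus_i f_if_i^*J\bigr)\subseteq uL$ and maximality of $J$ it splits into two cases, and in each case produces an edge $e$ with $ee^*L=ee^*J$ (in Case 2 this is a summand that collapses, i.e. $ee^*\in J$), whence $e^*L=e^*J$; it then transfers the simple module from $u$ to $v=r(e)$ not by an explicit map but by citing Proposition \ref{simplesdevertices}(ii), obtaining $uL/J\cong vL/e^*J=vL/e^*L$. Observe that for an edge of the paper's kind your map $\psi$ is identically zero (its image is $(ee^*L+J)/J=0$), while for an edge of your kind the kernel is never $e^*L$: the two requirements --- $\psi$ surjective, and kernel equal to $e^*L$ --- are mutually exclusive, one being equivalent to $ee^*\notin J$ and the other to $ee^*\in J$. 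This is why your plan cannot be completed for any edge, and why the argument must be routed through a transfer statement such as Proposition \ref{simplesdevertices}(ii) (together with the case analysis locating an edge with $ee^*L=ee^*J$) rather than through a direct first-isomorphism-theorem computation.
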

\begin{proof}
Write $L=L(E)$. Use relation (4) to write $u=ee^*+\sum_i f_if_i^*$ (note that $i\geq 1$). For every $y\in J$ we may write $y=uy=ee^*y+\sum
f_if_i^*y$, so that $J\subseteq ee^*J\oplus\big(\bigoplus f_if_i^*J\big)\subseteq uL$. By the maximality of $J$ we have two possibilities.

Case 1: $ee^*J\oplus\big(\bigoplus f_if_i^*J\big)=uL$. For any $l\in L$ write $ee^*l=ee^*a+\sum f_if_i^*b_i$ with $a,b_i\in J$. Multiply on the right
hand side by $ee^*$ to obtain $ee^*l=ee^*a\in ee^*J$. Hence, $ee^*L\subseteq ee^*J\subseteq ee^*L$, that is, $ee^*L=ee^*J$. Apply Proposition
\ref{simplesdevertices} (ii) for $\alpha=e$ to get $uL(E)/J\cong vL(E)/e^*J=vL(E)/e^*L$.

Case 2: $ee^*J\oplus\big(\bigoplus f_if_i^*J\big)=J$. In this situation $$uL/J\cong \Big(ee^*L\oplus\big(\bigoplus
f_if_i^*L\big)\Big)/\Big(ee^*J\oplus\big(\bigoplus f_if_i^*J\big)\Big)\cong ee^*L/ee^*J\oplus\big(\bigoplus f_if_i^*L/f_if_i^*J\big).$$ The
simplicity of $uL/J$ implies that every summand but one must be zero. We may suppose that $ee^*L/ee^*J=0$. Then, Proposition \ref{simplesdevertices}
(ii) applies again to have $uL(E)/J\cong vL(E)/e^*J=vL(E)/e^*L$.
\end{proof}

We now have all the ingredients in hand to prove the final result of the article.

\begin{theorem}\label{primitivos} Let $E$ be a graph. The following conditions are equivalent.
\begin{enumerate}[{\rm (i)}]
\item $L(E)$ is left primitive.
\item $L(E)$ is right primitive.
\item $E$ satisfies Conditions {\rm (L)} and {\rm (MT3)}.
\item $I\cap J\cap E^0\neq \emptyset$ for every nonzero ideals $I$ and $J$ of $L(E)$.
\end{enumerate}
\end{theorem}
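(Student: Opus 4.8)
The plan is to organize the argument around the two purely graph-theoretic conditions. First, (iii)$\Leftrightarrow$(iv) requires nothing new: it is exactly Proposition~\ref{characterizations}(i). Next I would dispose of (i)$\Leftrightarrow$(ii) by means of the canonical anti-automorphism $*$ of $L(E)$ determined by $v\mapsto v$, $e\mapsto e^*$ and $e^*\mapsto e$; since the defining relations (1)--(4) are interchanged or preserved under $*$, this yields a $K$-algebra isomorphism $L(E)\cong L(E)^{\mathrm{op}}$, and left primitivity of a ring is precisely right primitivity of its opposite. Hence it suffices to prove, say, (iii)$\Leftrightarrow$(ii), the statement for left primitivity then being automatic.

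For (ii)$\Rightarrow$(iii): a primitive ring is prime (if $M$ is faithful simple and $IJ=0$ for nonzero ideals $I,J$, then $MI=M$ by simplicity and faithfulness, whence $MJ=M(IJ)=0$ forces $J\subseteq\Ann(M)=0$), so Corollary~\ref{whenisprime} yields Condition (MT3), i.e.\ $E\in{\mathcal M}(E)$. If (L) failed there would be a cycle without exit, so the closed simple path it determines has no exit in $M=E^0$, giving $E^0\notin{\mathcal M}_\gamma(E)$, that is, $E\in{\mathcal M}_\tau(E)$; but then Lemma~\ref{Mtaunotprimitive} says $L(E)$ is not primitive, a contradiction. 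Thus (L) holds and (iii) follows.

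The substance is (iii)$\Rightarrow$(ii). The key reduction is that, by Proposition~\ref{characterizations}(ii), Condition (L) forces every nonzero two-sided ideal of $L(E)$ to contain a vertex; consequently a simple right module $M$ is faithful as soon as $\Ann(M)$ contains no vertex, i.e.\ as soon as $Mw\neq 0$ for every $w\in E^0$ (note $w\in\Ann(M)$ is equivalent to $Mw=0$). So the goal becomes the construction of a single simple right module $M$ with $Mw\neq 0$ for all $w$. If $P_l(E)\neq\emptyset$ this is immediate from Lemma~\ref{Pointlineprimitive}. In the remaining case I would use Condition (MT3) together with the countability of $E^0$ to build, by a diagonal/cofinality argument, an infinite path $\gamma=e_1e_2\cdots$ through vertices $v_0,v_1,\dots$ such that every vertex of $E$ connects to some $v_n$. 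Starting from $M\cong v_0L(E)/J$ as in Lemma~\ref{modulosimple}, I would transport the module one edge at a time along $\gamma$, using Proposition~\ref{simplesdevertices}(ii) at vertices emitting a single edge and Proposition~\ref{quotientedges} at bifurcations, thereby pinning down $J$ as the right submodule generated by the elements $e_1\cdots e_{n}(v_n-e_{n+1}e_{n+1}^*)$. Faithfulness then follows: given $w$, choose a path $\rho$ from $w$ to some $v_n$ (possible since $w$ connects to $\gamma$), and check that the class of $e_1\cdots e_n\rho^*\in v_0L(E)w$ is nonzero in $M$, so $Mw\neq 0$; with (L) this gives $\Ann(M)=0$, hence right primitivity.

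The main obstacle is establishing that this module $M$ is genuinely simple, i.e.\ that $J$ above is a maximal right submodule of $v_0L(E)$. This is where Condition (L) must be used in earnest: when $\gamma$ runs into a cycle, the exit guaranteed by (L) has to be exploited to rule out a proper submodule strictly between $J$ and $v_0L(E)$, while the cofinality coming from (MT3) must be invoked to propagate any nonzero element of a hypothetical submodule back to the generator $\overline{v_0}$. Controlling the eventually-periodic behaviour of $\gamma$ (the phenomenon excluded for graph C*-algebras but genuinely present here, corresponding to cycles without exits) is exactly the delicate point, and it is precisely what makes the algebraic characterization differ from its analytic analogue.
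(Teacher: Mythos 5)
Your handling of the peripheral equivalences is sound: (iii)$\Leftrightarrow$(iv) is indeed just Proposition~\ref{characterizations}(i); your (ii)$\Rightarrow$(iii) is the same argument as the paper's (primitive $\Rightarrow$ prime $\Rightarrow$ (MT3) via Corollary~\ref{whenisprime}, then Lemma~\ref{Mtaunotprimitive} to force (L)); and deducing (i)$\Leftrightarrow$(ii) from the involution $v\mapsto v$, $e\mapsto e^*$, $e^*\mapsto e$, which gives $L(E)\cong L(E)^{\mathrm{op}}$, is a clean alternative to the paper's ``proved analogously''. The genuine gap is in (iii)$\Rightarrow$(ii) in the case $P_l(E)=\emptyset$, which is the entire substance of the theorem. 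There your text is a program, not a proof: you construct a candidate module $M=v_0L(E)/J$, with $J$ generated by the elements $e_1\cdots e_n(v_n-e_{n+1}e_{n+1}^*)$ along a cofinal infinite path $\gamma$, and then you state yourself that ``the main obstacle is establishing that this module $M$ is genuinely simple'' --- yet no argument for the maximality of $J$ is given, only a description of what Conditions (L) and (MT3) ``must be'' used for. The subsidiary claims are also unproved: that $J$ is proper (i.e.\ $v_0\notin J$) and that the class of $e_1\cdots e_n\rho^*$ is nonzero in $M$ (your faithfulness step) both require a normal form for $v_0L(E)$ modulo $J$ that you never establish; moreover Proposition~\ref{quotientedges} only asserts an isomorphism $uL(E)/J\cong r(e)L(E)/e^*L(E)$ for \emph{some} $e\in s^{-1}(u)$, so it cannot be used to ``transport the module along $\gamma$'' edge by prescribed edge. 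In particular, when $\gamma$ is forced to become eventually periodic --- exactly the situation that distinguishes the algebraic from the analytic setting --- nothing in your sketch rules out a submodule strictly between $J$ and $v_0L(E)$.

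By contrast, the paper never constructs a faithful simple module at all. When $P_l(E)=\emptyset$, Condition (L) yields a vertex $u$ with $|s^{-1}(u)|\geq 2$; then Lemma~\ref{modulosimple}, Proposition~\ref{simplesdevertices}(i) (this is where (MT3) enters) and Proposition~\ref{quotientedges} show that every simple right $L(E)$-module is isomorphic to $r(e)L(E)/e^*L(E)$ for some $e\in s^{-1}(u)$, so there are only \emph{finitely many} annihilators of simple right modules. Since the Jacobson radical is the intersection of these finitely many annihilators and vanishes by semiprimitivity (\cite[Proposition 6.3]{AA3}), while Proposition~\ref{characterizations}(i) forces any finite intersection of nonzero ideals to contain a vertex, one of the annihilators must be zero, i.e.\ some $r(e)L(E)/e^*L(E)$ is faithful. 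This finiteness-plus-semiprimitivity argument is exactly what your ingredients (the same cited lemmas and propositions) are designed for; completing your constructive route instead would require developing and proving simplicity of modules attached to infinite paths, including the eventually periodic case, which is a substantial piece of mathematics absent from your proposal. As it stands, your proposal does not prove the theorem.
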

\begin{proof}
(ii) $\Rightarrow$ (iii). If $L(E)$ is right primitive, then it is prime so that Proposition \ref{gradedprime} yields that $E$ satisfies Condition
(MT3). If $E$ does not satisfy Condition (L), then $E\in {\mathcal M}_\tau(E)$, and by Lemma \ref{Mtaunotprimitive}, $L(E)$ is not right primitive, a
contradiction.

(iii) $\Rightarrow$ (ii). Denote $L=L(E)$. If $P_l(E)\neq \emptyset$, we finish by Lemma \ref{Pointlineprimitive}. So, suppose $P_l(E)=\emptyset$.
Since $E$ satisfies Condition (L), there exists $u\in E^0$ with $|s^{-1}(u)|\geq 2$. Given any $v\in E^0$, by Condition (MT3) there exists $w\in E^0$
such that $u,v\geq w$. In this situation Proposition \ref{simplesdevertices} (i) gives ${\mathcal S}_u={\mathcal S}_w={\mathcal S}_v$, so that
${\mathcal S}_v={\mathcal S}_u$.

By Lemma \ref{modulosimple} every simple right module $M$ is isomorphic to $vL/J$ for some vertex $v\in E^0$ and some maximal submodule $J$ of $vL$.
Hence, Proposition \ref{quotientedges} implies that
$$\{\Ann(M)\ |\ M \hbox{ is a simple right }L\hbox{-module
}\}=$$ $$ \{\Ann(r(e)L/e^*L), \hbox{ with }e\in s^{-1}(u) \mbox{ and } r(e)L/e^*L \mbox{ simple}\} \eqno{(\dag)}$$ Clearly the second set is finite.
If all its elements are nonzero, then we can apply Proposition \ref{characterizations} (i) to get
$$\bigcap\limits_{
e\in s^{-1}(u),\ r(e)L/e^*L {\hbox{ {\tiny simple}}} } \Ann(r(e)L/e^*L) \cap E^0\neq \emptyset.$$ If we denote by $J(L)$ the Jacobson radical of $L$,
we know that $J(L)=0$ by \cite[Proposition 6.3]{AA3}. Now $(\dag)$ gives
$$J(L)=\bigcap_{M {\hbox{ {\tiny simple}}}} \Ann(M)= \bigcap\limits_{
e\in s^{-1}(u),\ r(e)L/e^*L {\hbox{ {\tiny simple}}} } \Ann(r(e)L/e^*L)\neq 0,$$ a contradiction. Thus, $\Ann (r(e)L/e^*L)=0$ for some simple
$L$-module $r(e)L/e^*L$, as desired.

(i) $\Leftrightarrow$ (iii) is proved analogously.

(iii) $\Leftrightarrow$ (iv) is Proposition \ref{characterizations} (i).
\end{proof}

\begin{remark}\label{leftright} {\rm In noncommutative Ring Theory, one-sided conditions tend not to be left-right symmetric (perhaps with the
remarkable exception of semisimplicity). However, for Leavitt path algebras, the natural phenomena seem to be the opposite: for instance, in
\cite[Theorem 3.10]{AAS1} it was shown that $L(E)$ is left noetherian if and only if it is right noetherian, and later on in \cite[Theorem 2.2]{AAPS}
the left-right symmetry was established for the artinian condition as well. Moreover, in \cite[Theorem 2.6]{AAPS} and \cite[Theorem
3.8]{AAPS}, similar situations arose for the locally artinian and locally noetherian properties.

In this sense, Theorem \ref{primitivos} adds the primitive condition to the list of left-right symmetric properties for Leavitt path algebras, and
therefore yields stronger support to the claim that $L(E)$ carries some type of extra symmetry within.}
\end{remark}

\begin{examples} {\rm In contrast with Remark \ref{leftright}, the containments
$$\{R\ |\ R \mbox{ is simple }\}\subseteq \{R\ |\ R \mbox{ is left primitive }\}\subseteq \{R\ |\ R \mbox{ is prime }\}$$ are proper for Leavitt
path algebras in the same way that they so are for general rings. To exhibit such examples, one simply uses the characterizations of prime (Corollary
\ref{whenisprime}), left primitive (Theorem \ref{primitivos}) and simple (\cite[Theorem 3.11]{AA1}) Leavitt path algebras in terms the properties of
their underlying graphs. Thus, perhaps the easiest examples can be built out of the following graphs:

$$\begin{array}{ccccc}E:\ \ & \xymatrix{{\bullet} \ar@(ur,ul) } & \phantom{aquivaunespacio} & F:\ \ \ & \xymatrix{{\bullet} \ar@(dl,ul) \ar[r] & {\bullet}}
 \end{array}$$
By using the results above, it is straightforward to check that $L(E)$ is prime but not left (nor right) primitive, whereas $L(F)$ is left (and
right) primitive but not simple. In fact $L(E)\cong K[x,x^{-1}]$ (see \cite[Examples 1.4]{AA1}), and $L(F)\cong T$ where $T$ denotes the algebraic
Toeplitz algebra (see \cite[Theorem 5.3]{S}).}
\end{examples}

\section*{Acknowledgments}
The first author gratefully acknowledges the support from the Centre de Recerca Mate\-m\`a\-tica.
Part of this work was carried out during visits of the second and third authors to the Centre de Recerca Matem\`atica, and of the first and second
authors to the Universidad de M\'alaga. They thank these host centers for their warm hospitality and support.

\end{document}